\def\bc{\begin{center}}
\def\ec{\end{center}}
\def\s2c{\vskip 2cm}
\begin{document}
\renewcommand{\theequation}{\thesection.\arabic{equation}}
\numberwithin{equation}{section}
\def\mysection{\setcounter{equation}{0}\section}
\newtheorem{theorem}{Theorem}[section]
\newtheorem{problem}{Problem}[section]
\newtheorem{remark}[theorem]{remark}
\newtheorem{lemma}{Lemma}[section]
\theoremstyle{definition}
\newtheorem{example}[section]{Example}
\newtheorem{proposition}[theorem]{Proposition}
\newtheorem{Corollary}[theorem]{Corollary}
\newtheorem{definition}[theorem]{Definition}
\newtheorem{property}[]{Property}
\date{}

\title{\textbf{Faedo-Galerkin approximation technique to non-instantaneous impulsive abstract functional differential equations}}
\author {\textbf{Shahin Ansari and Muslim Malik } \\
School of Mathematical and Statistical Sciences,\\
Indian Institute of Technology Mandi, Kamand 175 005, India. \\
Emails: shahinansaripz@gmail.com, muslim@iitmandi.ac.in \\
  \\
   }
 \maketitle
  \author

\noindent \textbf{Abstract:} This manuscript is devoted to the study of a class of nonlinear non-instantaneous impulsive first order abstract retarded type functional differential equations in an arbitrary separable Hilbert space $\mathcal{H}$. A new set of sufficient conditions are derived to ensure the existence of approximate solutions. Finite dimensional approximations are derived using the projection operator. Through the utilization of analytic semigroup theory, fixed point theorem and Gronwall's inequality, we establish the uniqueness and convergence of approximate solutions. Additionally, we study the Faedo-Galerkin approximate solutions and establish some convergence results. Finally, an illustrative instance demonstrating the applications of obtained  results to partial differential equations is provided.
\vskip .5cm
\noindent  \textbf{Keywords:} Abstract functional differential equations; Non-instantaneous impulses; Semigroup theory; Fractional power of closed operators.
\vskip .5cm
 \noindent\textbf{AMS Subject Classification (2020):} 12H20, 34A37, 34A45, 47D06.
\section{Introduction}

An interesting and important area of study is the theory of differential equations in abstract spaces. By solving an abstract differential equation means that we are solving a class of differential equations, and hence we don't need to find solution of every differential equation from a given class of differential equations. From physical models, several problems in partial differential equations arise which can be reformulated as abstract differential equations in appropriate infinite dimensional space. This abstract formulation is made by suppressing the space variable and in this formulation the dimension of space is changed from finite to infinite. In abstract formulation, we utilize the powerful technique of semigroup of linear operators and functional analysis to establish the existence and uniqueness result of the abstract problems.\par 
Many real life problems can be modelled using initial value problems in differential equations, often exhibit memory effects. These effects can be better represented using more refined models with differential equations incorporate delayed or retarded arguments. Retarded functional differential equations constitute an extensive class of evolutionary equations. These equations are distinguished by the feature that, at any given point in time, the rate of change of the solution is contingent upon a discrete or distributed collection of previously achieved values of the solution. Hence, the initial value problem concern with retarded function differential equations is an infinite dimensional problem, which extends numerical and theoretical analysis beyond classical approaches tailored for functional element free differential equations.\par
In the real world, many evolving processes have the feature that at some points their states changes rapidly, such changes are referred to as  impulsive changes. In recent years, impulsive differential equations have become a natural framework for modelling of evolving processes and phenomena studied in population dynamics \cite{B1,B2}, ecology, control theory \cite{B3}, engineering \cite{B4} and so on. In the literature impulses can be categorized into two classes, namely instantaneous impulses (impulsive effect lasts for a very little period of time relative to the duration of the overall process) and non-instantaneous impulses (impulsive process remains active for a finite time period). For the details, related to impulsive problems, one can see \cite{B5,B6,B7,B8,3,8,S,9} and cited references therein. However, few dynamics of evolutionary processes in pharmacology could not be addressed by models with instantaneous impulses. For example, an insulin can be prescribed to a hyperglycemic patient and the injection of the medication into the blood circulatory system and the body’s consequential absorption are steady and ongoing processes that occur over a finite time span. This situation can be model by accounting a non-instantaneous impulse that begins with a jump and continuously proceeds for a finite duration. In \cite{B9}, Hernandez and O' Regan introduced the concept of non-instantaneous impulsive equations.\par
It has been noticed that many differential equations are hard to solve analytically and hence in this case we need to established the approximate solution. We have several approaches to the solvability of evolution problems in the time domain. The Faedo-Galerkin method is one of the most accomplished technique in which the mild solution of the proposed problem is the limit of the Cauchy sequence formed by approximate solutions. This method is extensively covered in the Lions's book \cite{F1}.\par
In 1978, Bazley \cite{F3,F4} utilized Heinz and Wahl's \cite{F2} existence result to study the following semilinear wave equation
\begin{align}\label{1.1}
\nu^{\prime \prime}(\varrho) + \mathcal{A} \nu(\varrho) &= N(\nu(\varrho)), \quad \varrho \geq 0, \nonumber \\
\nu(0) &= \phi, \quad \nu^{\prime}(0) = \theta,
\end{align}
and established the convergence of Faedo-Galerkin approximation of solutions to equation~(\ref{1.1}). After that, Goethel \cite{F5} extended this outcome to the following functional Cauchy problem
\begin{align}\label{1.2}
\nu^{\prime}(\varrho) + \mathcal{A} \nu(\varrho) &= N(\nu(\varrho)), \quad \varrho \geq 0, \nonumber \\
\nu(0) &= \phi,
\end{align}
by presuming $N$ is defined throughout the entire Hilbert space $\mathcal{H}$. Expanding upon Bazley's framework, Miletta \cite{F6} extended the work of Goethel \cite{F5} and proved the convergence of approximate solutions to (\ref{1.2}) by removing the constraints on $N$ as previously stipulated by Goethel \cite{F5}. Afterwards, a significant number of study has been done in this field. Muslim and Agarwal \cite{F7} have worked on the Faedo-Galerkin approximation of solutions to impulsive functional differential equations. Later on, some authors reported the work on approximation of solutions for fractional differential equations with impulsive conditions. For more study, one can see \cite{S2,S3,F8,F9}.
From the literature survey, we conclude that several good manuscripts on the Faedo-Galerkin approximation of solutions to various kind of differential equations have been published in many reputed journals. In contrast, this technique for non-instantaneous impulsive retarded differential equations is still an untreated topic. Thus, motivated by the above observation in this manuscript, we studied the  existence and convergence of Faedo-Galerkin approximate solutions of the following functional differential equation in a separable Hilbert space $\mathcal{H}$ with non-instantaneous impulses
\begin{align}\label{main1}
y^{\prime}(\varrho) &= -\mathcal{A}y(\varrho)+ \mathfrak{F}(\varrho, y_{\varrho}), \quad \varrho \in(\varsigma_i, \varrho_{i+1}], \ i = 0,1,2,\cdots,q, \nonumber \\
y(\varrho) &= h_i(\varrho, y(\varrho_i^-)), \quad \varrho \in (\varrho_i, \varsigma_i], \ i = 1,2,\cdots,q,  \\
y(\varrho) &= \chi(\varrho), \quad \varrho \in [-\tau,0], \quad \tau > 0 \nonumber
\end{align}
where $-\mathcal{A} : D(\mathcal{A}) \subset \mathcal{H} \rightarrow \mathcal{H}$ is the infinitesimal generator of an analytic semigroup of bounded linear operators $\mathfrak{T}(\varrho)_{\varrho \geq 0}$ defined on a separable Hilbert space $(\mathcal{H}, || . ||)$, pre-fixed numbers $\varsigma_i, \varrho_i$ fulfil the relation $0 = \varsigma_0 = \varrho_0 < \varrho_1 < \varsigma_1 < \varrho_2 < \cdots < \varsigma_i < \varrho_{i+1} = T$. For every $\varrho \geq 0$, $y_{\varrho} : [-\tau,0] \rightarrow \mathcal{H}$ is defined by $y_{\varrho}(\theta) = y(\varrho+\theta), \theta \in [-\tau,0]$ represents the history of $y$ at $\varrho$, $\chi \in \mathfrak{PC}_{0}$ is a given function, the maps $\mathfrak{F} \in C([0,T] \times \mathfrak{PC}_{0}; \mathcal{H})$, $h_i \in C((\varrho_i, \varsigma_i] \times \mathcal{H}; \mathcal{H})$ for all $i = 1,2, \cdots,q$ and $y(\varrho_{i}^-) = \lim_{ k \rightarrow 0^+}y(\varrho_{i}-k)$ stands for the left limit of $y(\varrho)$ at $\varrho = \varrho_i$.\\
The following is the format of the paper: In section \ref{Section 2}, we given some general assumptions and preliminary results. In section \ref{Section 3}, we study the existence and uniqueness of the approximate solutions for problem~(\ref{main1}). In section \ref{Section 4},  convergence of approximate solutions to the solution of the corresponding integral equation is established. Section \ref{Section 5} introduces Faedo-Galerkin approximate solutions for problem~(\ref{main1}) and show their convergence in the finite dimensional space. At last, in section \ref{Section 6}, an example is presented to show the validity of the obtained abstract results.

\section{Preliminaries}\label{Section 2}
This part introduces important results and certain key notations that will be employed throughout the residual of this manuscript. Let $M$ is a positive constant and $0 \in \rho(-\mathcal{A})$ such that $\parallel \mathfrak{T}(\varrho) \parallel \leq M $ for all $\varrho \geq 0$. Which allows us to define the $\alpha$-fractional power $\mathcal{A}^{\alpha} : D(\mathcal{A}^{\alpha}) \subset \mathcal{H} \rightarrow \mathcal{H}$ of $\mathcal{A}$ for $\alpha \in (0,1)$. $\mathcal{H}_{\alpha}$ represents $D(\mathcal{A}^{\alpha})$ forms a Banach space with the norm $\lVert y \rVert_{\alpha} = \ \rVert \mathcal{A}^{\alpha}y \lVert$. We encourage the readers to \cite{F13,F14,F15} for more study related to semigroup theory. \par
To address the impulses, we take into account the space of piecewise continuous functions $\mathfrak{PC}_{\varrho}^{\alpha} = \mathfrak{PC}([-\tau, \varrho];\mathcal{H}_{\alpha}) = \{y : [-\tau, \varrho] \rightarrow \mathcal{H}_{\alpha} : y \in C((\varrho_{i},\varrho_{i+1}];\mathcal{H}_{\alpha}), \ i = 0,1,2,\cdots,q \ \text{with} \ y(\varrho_{i}^{-}) = y(\varrho_{i})\  \text{and} \  y(\varrho_{i}^{+}) \  \text{exists}, \ i = 1,2,\cdots,q \}$. $\mathfrak{PC}_{\varrho}^{\alpha}$ forms a Banach space with the norm $\parallel y \parallel_{\varrho,\alpha} = \sup\limits_{t \in [-\tau, \varrho]} \lVert y(t) \rVert_{\alpha}$. In addition, $B_{R}(\mathcal{H},z)$ represents the closed ball with center at z and radius R.\\ 
Our findings can be demonstrated based on the following set of requisite assumptions:
\vspace{2mm}
\\
\textbf{(A1):} $\mathcal{A} : D(\mathcal{A}) \rightarrow \mathcal{H}$ such that $\overline{D(\mathcal{A})} = \mathcal{H}$ is positive definite, self-adjoint, closed linear operator and let $\mathcal{A}$ has pure point spectrum\\
\begin{equation*}
0 < \lambda_{0} \leq \lambda_{1} \leq \lambda_{2} \leq \cdots \leq \lambda_{n} \leq \cdots
\end{equation*}
where $\lambda_{n} \rightarrow \infty$ as $n \rightarrow \infty$ and an orthonormal system of eigenfunctions $\psi_{j}$ is complete, which  is corresponding to $\lambda_j$ i.e.
\begin{equation*}
\mathcal{A} \psi_{j} = \lambda_{j}\psi_{j} \quad \text{and} \quad \langle  \psi_{i} , \psi_{j} \rangle = \delta_{ij},
\end{equation*}
 where $\delta_{ij}$ denotes the Kronecker Delta function.
\vspace{2mm}
\\
\textbf{(A2):} The function ${\chi}(\varrho) \in D(\mathcal{A}^{\alpha})$ and is locally H$\ddot{\text{o}}$lder continuous on $[-\tau, 0]$. We define
\begin{equation*}
\bar{\chi}(\varrho) = \begin{cases}
 \chi(\varrho), \quad \varrho \in [-\tau,0],\\
 \chi(0), \quad \varrho \in [0,T].
\end{cases}
\end{equation*}
\vspace{2mm}
\\ 
\textbf{(A3):} The function $\mathfrak{F}: [0,T] \times \mathfrak{PC}_{0}^{\alpha} \rightarrow \mathcal{H}$ is continuous and satisfies
\begin{equation*}
\parallel \mathfrak{F}(\varrho,x)- \mathfrak{F}(\varsigma,y) \parallel \ \leq L_{\mathfrak{F}}(|\varrho-\varsigma|^{\eta}+ \parallel x-y \parallel_{0,\alpha})
\end{equation*}
for all  $\varrho,\varsigma \in [0,T]$, $\eta \in (0,1]$ and $y,x \in B_R(\mathfrak{PC}_{0}^{\alpha},\bar{\chi})$, where $L_{\mathfrak{F}}$ is a positive constant.
\vspace{2mm}
\\
\textbf{(A4):} The functions $h_{i}:(\varrho_{i},\varsigma_{i}] \times \mathcal{H}_{\alpha} \rightarrow \mathcal{H}_{\alpha}$, $i = 1,2,\cdots,q$ are continuous and there exist positive constants $L_{h_i}$, $i = 1,2,\cdots,q$ such that
\begin{equation*}
\parallel h_i(\varrho,x)- h_i(\varsigma,y) \parallel_{\alpha} \ \leq L_{h_i} [|\varrho-\varsigma| + \parallel x-y \parallel_{\alpha}]
\end{equation*}
also
\begin{equation*}
\parallel h_{i}(\varrho,x) \parallel_{\alpha} \ \leq M_{h}, \quad \forall \ x \in \mathcal{H}_{\alpha}, \quad \varrho \in (\varrho_{i},\varsigma_{i}],
\end{equation*}
for all $\varrho,\varsigma \in (\varrho_{i},\varsigma_{i}]$ and $y,x \in B_R(\mathcal{H}_{\alpha},\bar{\chi})$, where $M_{h}$ is a positive constant.\vspace{2mm}
\\
To simplify notation, we set \\
$B_1 = \parallel f(0,0) \parallel ; \quad L(R) = L_{\mathfrak{F}}(T^{\eta} + R + \parallel \bar{\chi} \parallel_{T,\alpha} )+ B_{1}; \quad Q_{i} = M_{\alpha} L_{\mathfrak{F}} \frac{\varrho_{i+1}^{1-\alpha}}{1-\alpha}, \quad i = 0,1,2,\cdots,q$;\\
$N_{0} = \sup\limits_{\varrho \in [0,T]} \parallel (\mathfrak{T}(\varrho)-I) \chi(0) \parallel_{\alpha}+ M_{\alpha}\big[L_{\mathfrak{F}}(T^{\eta} + \parallel \bar{\chi} \parallel_{T,\alpha})+ B_{1} \big] \frac{\varrho_{1}^{1-\alpha}}{1-\alpha}$;\\
$N_{i} = MM_{h}+ \lVert \bar{\chi}\rVert_{T,\alpha}+ M_{\alpha}\big[ L_{\mathfrak{F}}(T^{\eta} + \parallel \bar{\chi} \parallel_{T,\alpha})+ B_{1} \big] \frac{\varrho_{i+1}^{1-\alpha}}{1-\alpha}, \ i = 1,2,\cdots,q;$\\
$E_{0} = M_{\alpha}L_{\mathfrak{F}}\frac{\varrho_{1}^{1-\alpha}}{1-\alpha}; \quad E_{i} = M L_{h_{i}}+ M_{\alpha}L_{\mathfrak{F}}\frac{\varrho_{i+1}^{1-\alpha}}{1-\alpha}, \ i = 1,2,\cdots,q; \quad D = \max\limits_{1 \leq i \leq q}\bigg\{\max\limits_{0 \leq i \leq q} E_{i}, L_{h_i}\bigg\}.$

\begin{definition}
A function $y \in \mathfrak{PC}_{T}^{\alpha}$ is considered to be a mild solution of the problem (\ref{main1}) if $y(\varrho)$ fulfils the following integral equations:
\begin{equation}\label{2.4}
y(\varrho)=\begin{cases}
\ \chi(\varrho), \qquad \varrho \in [-\tau,0], \\
\ \mathfrak{T}(\varrho)\chi(0) + \int_{0}^{\varrho} \mathfrak{T}(\varrho-\varsigma)\mathfrak{F}(\varsigma,y_{\varsigma})d\varsigma, \quad \varrho \in [0,\varrho_{1}], \\
\ h_{i}(\varrho,y(\varrho_{i}^{-})), \quad \varrho \in (\varrho_{i},\varsigma_{i}], \quad i=1,2,\cdots,q, \\
\ \mathfrak{T}(\varrho-\varsigma_{i})h_{i}(\varsigma_{i},y(\varrho_{i}^{-}))+ \int_{\varsigma_{i}}^{\varrho} \mathfrak{T}(\varrho-\varsigma)\mathfrak{F}(\varsigma,y_{\varsigma})d\varsigma, \quad \varrho \in (\varsigma_{i},\varrho_{i+1}],\\
\ \qquad \qquad \qquad \qquad \qquad \qquad \qquad \qquad \qquad \quad i = 1,2,\cdots,q.
\end{cases}
\end{equation}	
\end{definition}
\section{{\large\textbf{Existence of approximate solutions}}}\label{Section 3}
In this section, the main existence and convergence results for the system (\ref{main1}) are established. The following results are required to demonstrate the existence of approximate solutions to the problem.\\
From our assumptions on the operator $\mathcal{A}$, we infer that \\
\begin{equation}\label{3.1}
\parallel \mathcal{A}^{\alpha} \mathfrak{T}(\varrho)\parallel \leq M_{\alpha}\varrho^{-\alpha}, \quad \varrho \in (0,\infty),
\end{equation}
where $M_{\alpha}$ is a positive constant. Also
\begin{equation}\label{3.2}
\parallel (\mathfrak{T}(\xi)-I)x \parallel \leq M_{\nu}^{\prime} \xi^{\nu} \parallel x \parallel_{\nu}, \quad \xi > 0,
\end{equation}
where $ x \in D(\mathcal{A}^{v})$ and $0 < v \leq 1$. Let $v$ be any real number such that $\{1-\alpha \} > v > 0$, then for any $ u \in D(\mathcal{A}^{\alpha+v})$, we have $\mathcal{A}^{\alpha}u \in D(\mathcal{A}^{v})$.
Now for any $\varsigma, \varrho \in (0,T],\ \varsigma < \varrho$ and $\delta > 0$, we have the following inequalities
\begin{align}
\label{3.3} \parallel (\mathfrak{T}(\delta) -I)\mathcal{A}^{\alpha}\mathfrak{T}(\varrho-\varsigma)\parallel &\leq  M_{\nu}^{\prime} M_{\alpha+\nu}\delta^{v}(\varrho-\varsigma)^{-(\alpha+v)},\\
\label{3.4} \parallel (\mathfrak{T}(\delta) -I)\mathcal{A}^{\alpha}\mathfrak{T}(\varrho)\parallel &\leq \frac{M_{\nu}^{\prime} M_{\alpha+\nu} \delta^{v}}{\varrho^{\alpha+v}}.
\end{align}
Now, we define $\mathcal{H}_n = \text{span}\{\psi_0, \psi_1, \psi_2 \cdots \psi_n\}\subseteq \mathcal{H}$. Clearly, $\mathcal{H}_n$ is a finite dimensional subspace of $\mathcal{H}$
and let $\mathcal{P}^n: \mathcal{H} \rightarrow \mathcal{H}_n$ be the corresponding projection operators for each $n \in \mathbb{N}_{0}.$\par
We define
\begin{align*}
\mathfrak{F}_{n}: [0,T] \times \mathfrak{PC}_{0}^{\alpha} \rightarrow \mathcal{H}
\end{align*}
such that
\begin{align*}
\mathfrak{F}_{n}(\varrho,x) = \mathfrak{F}(\varrho,\mathcal{P}^{n}x)
\end{align*}
and
\begin{equation*}
h_{\text{i,n}} : (\varrho_{i}, \varsigma_i] \times \mathcal{H}_{\alpha} \rightarrow \mathcal{H}_{\alpha}
\end{equation*}
such that
\begin{equation*}
h_{\text{i,n}}(\varrho,x) = h_{i}(\varrho,\mathcal{P}^{n}x).
\end{equation*}
Consider a subset $B_{R}(\mathfrak{PC}_{T}^{\alpha},\bar{\chi}) \subseteq \mathfrak{PC}_{T}^{\alpha}$ defined as\\
\[B_R(\mathfrak{PC}_{T}^{\alpha},\bar{\chi}) = \{ z \in \mathfrak{PC}_{T}^{\alpha} : \lVert z-\bar{\chi} \rVert_{T,\alpha} \ \leq R \},\]
where
\begin{equation*}
R = \text{max}\bigg( \max\limits_{0 \leq i \leq q}\frac{N_{i}}{1-Q_{i}},\ M_{h}+ \lVert \bar{\chi} \rVert_{T,\alpha}\bigg).
\end{equation*}
We define the maps $\phi_{n}$ on $B_{R}(\mathfrak{PC}_{T}^{\alpha},\bar{\chi})$, $n \in \mathbb{N}_{0},$ as follows,
\begin{equation}
(\phi_{n}y)(\varrho)=\begin{cases}
\ \chi(\varrho), \qquad \varrho \in [-\tau,0], \\
\ \mathfrak{T}(\varrho)\chi(0) + \int_{0}^{\varrho} \mathfrak{T}(\varrho-\varsigma)\mathfrak{F}_{n}(\varsigma,y_{\varsigma})d\varsigma, \quad \varrho \in [0,\varrho_{1}],\nonumber \\
\ h_{i,n}(\varrho,y(\varrho_{i}^{-})), \quad \varrho \in (\varrho_{i},\varsigma_{i}], \quad i=1,2,\cdots,q, \nonumber \\
\ \mathfrak{T}(\varrho-\varsigma_{i})h_{i,n}(\varsigma_{i},y(\varrho_{i}^{-}))+ \int_{\varsigma_{i}}^{\varrho} \mathfrak{T}(\varrho-\varsigma)\mathfrak{F}_{n}(\varsigma,y_{\varsigma})d\varsigma, \quad \varrho \in (\varsigma_{i},\varrho_{i+1}],\\
\ \qquad \qquad \qquad \qquad \qquad \qquad \quad  i = 1,2,\cdots,q,
\end{cases}
\end{equation}
for any $y \in B_{R}(\mathfrak{PC}_{T}^{\alpha},\bar{\chi})$.
\vspace{2mm}
\\
To prove further results, we use the inequalities (\ref{3.1})-(\ref{3.4}).
\begin{theorem}\label{thrm1}
Let $\chi(\varrho) \in D(\mathcal{A})\  \forall \ \varrho \in [-\tau,0]$ and conditions $\textbf{(A1)}-\textbf{(A4)}$ along with $D < 1$ are hold. Then there exists a unique $y_{n} \in B_{R}(\mathfrak{PC}_{T}^{\alpha}, \bar{\chi})$ such that $\phi_{n}y_{n} = y_{n}$ for each $n \in \mathbb{N}_{0}$ and $y_n$ satisfies the following approximate solution
\begin{eqnarray}\label{main2}
y_{n}(\varrho)=
\begin{cases}
\ \chi(\varrho), \qquad \varrho \in [-\tau,0], \\
\ \mathfrak{T}(\varrho)\chi(0) + \int_{0}^{\varrho} \mathfrak{T}(\varrho-\varsigma)\mathfrak{F}_{n}(\varsigma,(y_{n})_{\varsigma})d\varsigma, \quad \varrho \in [0,\varrho_{1}], \\
\ h_{i,n}(\varrho,y_n(\varrho_{i}^{-})), \quad \varrho \in (\varrho_{i},\varsigma_{i}], \quad i=1,2,\cdots,q, \\
\ \mathfrak{T}(\varrho-\varsigma_{i})h_{i,n}(\varsigma_{i},y_n(\varrho_{i}^{-}))+ \int_{\varsigma_{i}}^{\varrho} \mathfrak{T}(\varrho-\varsigma)\mathfrak{F}_{n}(\varsigma, (y_{n})_{\varsigma})d\varsigma, \ \varrho \in (\varsigma_{i},\varrho_{i+1}], \\
\ \qquad \qquad \qquad \qquad \qquad \qquad \qquad \qquad i = 1,2,\cdots,q.
\end{cases}
\end{eqnarray}
\end{theorem}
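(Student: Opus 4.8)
The plan is to obtain $y_n$ as the unique fixed point of $\phi_n$ via the Banach contraction principle on $B_R(\mathfrak{PC}_{T}^{\alpha},\bar{\chi})$. Since $\mathfrak{PC}_{T}^{\alpha}$ is a Banach space under $\|\cdot\|_{T,\alpha}$ and $B_R(\mathfrak{PC}_{T}^{\alpha},\bar{\chi})$ is a closed ball, it is a complete metric space; hence it suffices to prove that $\phi_n$ maps this ball into itself and is a contraction there, after which the fixed point $y_n$ will by construction satisfy the integral identity (\ref{main2}).

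First I would verify the self-mapping property: for $y \in B_R(\mathfrak{PC}_{T}^{\alpha},\bar{\chi})$, estimate $(\phi_n y)(\varrho)-\bar{\chi}(\varrho)$ separately on the four types of subintervals. On $[-\tau,0]$ the difference is zero. On $[0,\varrho_1]$ I would write it as $(\mathfrak{T}(\varrho)-I)\chi(0)+\int_0^{\varrho}\mathfrak{T}(\varrho-\varsigma)\mathfrak{F}_n(\varsigma,y_\varsigma)\,d\varsigma$, bound the first summand by $\sup_{\varrho}\|(\mathfrak{T}(\varrho)-I)\chi(0)\|_{\alpha}$, and for the integral use the fractional-power estimate (\ref{3.1}) together with the Lipschitz bound \textbf{(A3)}, the baseline term $B_1=\|\mathfrak{F}(0,0)\|$, and the contractivity $\|\mathcal{P}^n\|\le 1$ of the orthogonal projection to control $\|\mathfrak{F}_n(\varsigma,y_\varsigma)\|\le L(R)$; integrability of $(\varrho-\varsigma)^{-\alpha}$ (valid as $\alpha<1$) yields the bound $N_0+Q_0 R$. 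On $(\varrho_i,\varsigma_i]$ the boundedness clause of \textbf{(A4)} gives $M_h+\|\bar{\chi}\|_{T,\alpha}\le R$. On $(\varsigma_i,\varrho_{i+1}]$ the same ingredients, via $\|\mathfrak{T}(\varrho-\varsigma_i)h_{i,n}\|_{\alpha}\le M M_h$ and the integral estimate, give the bound $N_i+Q_i R$. Since $D<1$ forces $Q_i<1$, the definition $R=\max(\max_i N_i/(1-Q_i),\,M_h+\|\bar{\chi}\|_{T,\alpha})$ makes $N_i+Q_i R\le R$ on every piece, establishing invariance.

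Next I would establish the contraction estimate. For $y,z \in B_R(\mathfrak{PC}_{T}^{\alpha},\bar{\chi})$, the additive $\chi(0)$-terms cancel, so on $[0,\varrho_1]$ the difference reduces to $\int_0^{\varrho}\mathfrak{T}(\varrho-\varsigma)\big(\mathfrak{F}_n(\varsigma,y_\varsigma)-\mathfrak{F}_n(\varsigma,z_\varsigma)\big)\,d\varsigma$; applying (\ref{3.1}), the Lipschitz part of \textbf{(A3)}, $\|\mathcal{P}^n\|\le 1$, and $\|y_\varsigma-z_\varsigma\|_{0,\alpha}\le\|y-z\|_{T,\alpha}$ gives the factor $E_0$. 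On $(\varrho_i,\varsigma_i]$ the Lipschitz bound of \textbf{(A4)} gives $L_{h_i}$, and on $(\varsigma_i,\varrho_{i+1}]$ the combination of the $h_i$-Lipschitz term $M L_{h_i}$ and the integral term $Q_i$ gives $E_i$. Taking the supremum over all subintervals yields $\|\phi_n y-\phi_n z\|_{T,\alpha}\le D\,\|y-z\|_{T,\alpha}$ with $D<1$.

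With invariance and contraction in hand, the Banach fixed point theorem delivers a unique $y_n \in B_R(\mathfrak{PC}_{T}^{\alpha},\bar{\chi})$ with $\phi_n y_n=y_n$, which is precisely (\ref{main2}). The main obstacle is the self-mapping step rather than the contraction: it requires splitting the magnitude of $\mathfrak{F}_n$ into a part Lipschitz in $R$ and a baseline $B_1$ part so that the bounds align exactly with the definitions of $N_i$, $Q_i$, and hence of $R$, while simultaneously handling the singular kernel $(\varrho-\varsigma)^{-\alpha}$ and the passage between the history seminorm $\|\cdot\|_{0,\alpha}$ and the global norm $\|\cdot\|_{T,\alpha}$. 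Care is also needed to confirm that each branch of $\phi_n y$ lands in $\mathfrak{PC}_{T}^{\alpha}$ with the correct one-sided limits, which follows from continuity of $\mathfrak{T}(\cdot)$, $\mathfrak{F}$, and the $h_i$ together with $\chi(0)\in D(\mathcal{A})$.
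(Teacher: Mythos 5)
Your proposal is correct and follows essentially the same route as the paper: a Banach fixed point argument on the closed ball $B_R(\mathfrak{PC}_{T}^{\alpha},\bar{\chi})$, with interval-by-interval self-mapping bounds $N_i+Q_iR\le R$ and contraction constants $E_0$, $L_{h_i}$, $E_i$ combining to $D<1$ (your observation that $D<1$ already forces $Q_i<1$, making the definition of $R$ legitimate, is a point the paper leaves implicit). The only difference is one of emphasis: the paper devotes a full first step to proving that $\varrho\mapsto(\phi_n y)(\varrho)$ is piecewise continuous via the estimates $\lVert(\mathfrak{T}(\delta)-I)\mathcal{A}^{\alpha}\mathfrak{T}(\varrho-\varsigma)\rVert\le M_{\nu}^{\prime}M_{\alpha+\nu}\delta^{\nu}(\varrho-\varsigma)^{-(\alpha+\nu)}$, which you compress into a closing remark, but the ingredients you cite are exactly those the paper uses.
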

\begin{proof}
 The theorem's proof is split into three steps for ease of understanding.\\
\textbf{Step 1:} Initially, we demonstrate the continuity of $\phi_{n}$ from $B_{R}(\mathfrak{PC}_{T}^{\alpha},\bar{\chi})$ to $B_{R}(\mathfrak{PC}_{T}^{\alpha},\bar{\chi})$.\\
For any $y \in B_R(\mathfrak{PC}_T^{\alpha}, \bar{\chi})$ and $\mu_1,\mu_2 \in [-\tau,0]$ with $\mu_2 > \mu_1$, we have 
\begin{equation}\label{3.3}
\lVert (\phi_{n} y)(\mu_2)-(\phi_{n} y)(\mu_1) \rVert_{\alpha} = \lVert {\chi}(\mu_2)-{\chi}(\mu_1) \rVert_{\alpha}.
\end{equation}
 For any $y \in B_{R}(\mathfrak{PC}_{T}^{\alpha},\bar{\chi})$ and $\mu_1,\mu_2 \in (0,\varrho_{1}]$ with $\mu_2 > \mu_1$,
\begin{align}\label{3.5}
\parallel (\phi_{n}y)(\mu_2) - (\phi_{n}y)(\mu_1) \parallel_{\alpha} & \ \leq \parallel \big(\mathfrak{T}(\mu_2-\mu_1)-I\big)\mathcal{A}^{\alpha}\mathfrak{T}(\mu_1) \parallel \parallel \chi(0) \parallel  \nonumber \\
& \quad + \int_{0}^{\mu_1} \parallel \big(\mathfrak{T}(\mu_2-\mu_1)-I\big)\mathcal{A}^{\alpha} \mathfrak{T}(\mu_1-\varsigma) \parallel \parallel \mathfrak{F}_{n}(\varsigma,y_{\varsigma}) \parallel d\varsigma  \nonumber\\
&\quad + \int_{\mu_1}^{\mu_2} \parallel \mathfrak{T}(\mu_2-\varsigma) \mathcal{A}^{\alpha} \parallel \parallel \mathfrak{F}_{n}(\varsigma,y_{\varsigma}) \parallel d\varsigma \nonumber \\
&\leq M_{\nu}^{\prime} M_{\alpha+\nu}(\mu_2-\mu_1)^{\nu}(\mu_1)^{-(\alpha+\nu)}\parallel \chi(0) \parallel + M_{\nu}^{\prime} M_{\alpha+\nu}(\mu_2-\mu_1)^{\nu}\nonumber\\
& \quad \times \frac{{\varrho_{1}}^{1-(\alpha+\nu)}}{1-(\alpha+\nu)} L(R) +M_{\alpha}\frac{(\mu_2-\mu_1)^{1-\alpha}}{1-\alpha}L(R).
\end{align}
Now, for any $y \in B_{R}(\mathfrak{PC}_{T}^{\alpha},\bar{\chi})$ and $\mu_1,\mu_2 \in (\varrho_{i},\varsigma_{i}],\ i = 1,2,\cdots,q$ with $\mu_2 > \mu_1$,
\begin{align}\label{3.6}
\parallel (\phi_{n}y)(\mu_2) - (\phi_{n}y)(\mu_1) \parallel_{\alpha} & \ = \ \parallel h_{i,n}(\mu_2,y(\varrho_{i}^{-})) - h_{i,n}(\mu_1,y(\varrho_{i}^{-})) \parallel_{\alpha} \nonumber \\
& \leq L_{h_i}|\mu_2-\mu_1|.
\end{align}
Similarly, for any $y \in B_{R}(\mathfrak{PC}_{T}^{\alpha},\bar{\chi})$ and $\mu_1,\mu_2 \in (\varsigma_{i},\varrho_{i+1}],\ i = 1,2,\cdots,q$ with $\mu_1 < \mu_2$,
\begin{align}\label{3.7}
\parallel (\phi_{n}y)(\mu_2) - (\phi_{n}y)(\mu_1) \parallel_{\alpha} & \ \leq \ \parallel (\mathfrak{T}(\mu_2 - \mu_1)-I)\mathcal{A}^{\alpha}\mathfrak{T}(\mu_1-\varsigma_{i}) \parallel \parallel h_{i,n}(\varsigma_i,y(\varrho_i^{-}))\parallel \nonumber \\
& \quad + \int_{\varsigma_{i}}^{\mu_1} \parallel (\mathfrak{T}(\mu_2 - \mu_1)-I)\mathcal{A}^{\alpha}\mathfrak{T}(\mu_1-\varsigma) \parallel \parallel \mathfrak{F}_{n}(\varsigma,y_{\varsigma}) \parallel d\varsigma \nonumber \\
& \quad + \int_{\mu_1}^{\mu_2} \parallel \mathcal{A}^{\alpha}\mathfrak{T}(\mu_2-\varsigma)\parallel \parallel \mathfrak{F}_{n}(\varsigma,y_{\varsigma}) \parallel d\varsigma \nonumber \\
& \leq M_{\nu}^{\prime}M_{\alpha+\nu}(\mu_2-\mu_1)^{\nu}(\mu_1-\varsigma_{i})^{-(\alpha+\nu)} \parallel h_{i,n}(\varsigma_i,y(\varrho_i^{-}))\parallel  \nonumber \\
& \quad + M_{\nu}^{\prime}M_{\alpha+\nu} (\mu_2-\mu_1)^{\nu}\frac{(\mu_1-\varsigma_i)^{1-(\alpha+\nu)}}{1-(\alpha+\nu)} L(R)\nonumber \\
& \quad +M_{\alpha}\frac{(\mu_2-\mu_1)^{1-\alpha}}{1-\alpha} L(R).
\end{align}
In the uniform operator topology, continuity of $\mathfrak{T}(\varrho)$ for $\varrho > 0$ implies that Eq.(\ref{3.3}) and inequalities (\ref{3.5})-(\ref{3.7}) tends to zero as $ \mu_2 \rightarrow \mu_1$ and the map $\varrho \rightarrow (\phi_ny)(\varrho)$ is continuous as a result.
\vspace{2mm}
\\
\textbf{Step 2:} Now, we need to show that the map $\phi_{n}$ is well defined. To show this, it suffices to verify that
\begin{equation*}
\parallel (\phi_{n}y)(\varrho) - \bar{\chi}(\varrho) \parallel_{\alpha}  \leq  R.
\end{equation*}
 For any $\varrho \in [-\tau,0]$, we have
\begin{equation}\label{3.8}
 (\phi_n y)(\varrho) - \bar{\chi}(\varrho)  = 0.
\end{equation}
Now, for any $y \in B_{R}(\mathfrak{PC}_{T}^{\alpha},\bar{\chi})$ and $\varrho \in (0,\varrho_1],$ we have
\begin{align}\label{3.9}
\parallel (\phi_{n}y)(\varrho)-\bar{\chi}(\varrho) \parallel_{\alpha} & \ \leq \parallel (\mathfrak{T}(\varrho)-I) \chi(0) \parallel_{\alpha} + \int_{0}^{\varrho} \parallel \mathcal{A}^{\alpha}\mathfrak{T}(\varrho-\varsigma) \parallel\parallel \mathfrak{F}_{n}(\varsigma,y_{\varsigma}) \parallel d\varsigma \nonumber \\
& \leq \parallel (\mathfrak{T}(\varrho)-I) \chi(0) \parallel_{\alpha} + \ M_{\alpha}L_{\mathfrak{F}}\big(T^{\eta}+R+\parallel \bar{\chi} \parallel_{T,\alpha}\big) \frac{\varrho^{1-\alpha}}{1-\alpha} \nonumber \\
&\quad + M_{\alpha} B_{1}\frac{\varrho^{1-\alpha}}{1-\alpha} \nonumber \\
& \leq N_{0} + Q_{0} R \quad \leq R.
\end{align}
Similarly, for any $y \in B_{R}(\mathfrak{PC}_{T}^{\alpha},\bar{\chi})$ and $ \varrho \in (\varrho_{i},\varsigma_{i}], i = 1,2,\cdots,q$,
\begin{align}\label{3.10}
\parallel (\phi_{n}y)(\varrho) - \bar{\chi}(\varrho) \parallel_{\alpha} & \ \leq \ M_{h} + \parallel \bar{\chi} \parallel_{T,\alpha} \quad \leq R .
\end{align}
Similarly, for any $y \in B_{R}(\mathfrak{PC}_{T}^{\alpha},\bar{\chi})$ and $\varrho \in (\varsigma_i, \varrho_{i+1}], i = 1,2,\cdots,q,$
\begin{align}\label{3.11}
\parallel (\phi_{n}y)(\varrho) - \bar{\chi}(\varrho) \parallel_{\alpha} & \leq \parallel \bar{\chi} \parallel_{T,\alpha} + \parallel \mathfrak{T}(\varrho-\varsigma_i) \parallel \parallel \mathcal{A}^{\alpha} h_{i,n}(\varsigma_i, y(\varrho_{i}^{-})) \parallel \nonumber \\
& \quad + \int_{\varsigma_{i}}^\varrho \parallel \mathcal{A}^{\alpha} \mathfrak{T}(\varrho-\varsigma) \parallel \parallel \mathfrak{F}_{n}(\varsigma,y_{\varsigma}) \parallel d\varsigma \nonumber \\
& \leq \parallel \bar{\chi} \parallel_{T,\alpha} + MM_h + M_{\alpha}L_{\mathfrak{F}}\big(T^{\eta}+R+\parallel \bar{\chi} \parallel_{T,\alpha}\big) \frac{(\varrho-\varsigma_i)^{1-\alpha}}{1-\alpha} \nonumber \\
&\quad + M_{\alpha} B_{1}\frac{(\varrho-\varsigma_i)^{1-\alpha}}{1-\alpha} \nonumber \\
& \leq N_i + Q_{i} R \quad \leq R.
\end{align}
After summarizing the above inequalities (\ref{3.9}), (\ref{3.10}) and (\ref{3.11}), we conclude that
\begin{equation*}
\parallel \phi_{n}y  - \bar{\chi} \parallel_{T,\alpha} \  \leq R, \quad \text{for all}\quad \varrho \in [-\tau,T].
\end{equation*}
Hence, $\phi_n$ maps $B_R(\mathfrak{PC}_{T}^{\alpha},\bar{\chi})$ into $B_R(\mathfrak{PC}_{T}^{\alpha},\bar{\chi})$.
\vspace{3mm}
\\
\textbf{Step 3:} Here, we demonstrate that $\phi_n$ is a strict contracting map. For any $y,x \in B_R(\mathfrak{PC}_T^{\alpha}, \bar{\chi})$ and $\varrho \in [-\tau,0],$
\begin{align}\label{3.12}
\parallel (\phi_{n} y)(\varrho) - (\phi_{n} x)(\varrho) \parallel_{\alpha} = 0.
\end{align} 
Now, for any $y,x \in B_{R}(\mathfrak{PC}_{T}^{\alpha},\bar{\chi})$ and $\varrho \in (0,\varrho_1]$,
\begin{align}\label{3.13}
\parallel (\phi_{n}y)(\varrho)-(\phi_{n}x)(\varrho) \parallel_{\alpha} &\leq \int_0^\varrho \parallel \mathcal{A}^{\alpha}\mathfrak{T}(\varrho-\varsigma) \parallel \mathfrak{F}_n(\varsigma,y_{\varsigma})- \mathfrak{F}_n(\varsigma,x_{\varsigma}) \parallel d\varsigma \nonumber \\
& \leq E_{0} \parallel y-x \parallel_{T,\alpha}.
\end{align}
Also, for any $y,x \in B_{R}(\mathfrak{PC}_{T}^{\alpha},\bar{\chi})$ and $\varrho \in (\varrho_i, \varsigma_i], \ i = 1,2,\cdots,q$,
\begin{align}\label{3.14}
\parallel (\phi_{n}y)(\varrho)-(\phi_{n}x)(\varrho) \parallel_{\alpha} &= \ \parallel h_{i,n}(\varrho, y(\varrho_i^{-})) - h_{i,n}(\varrho,x(\varrho_i^{-})) \parallel_{\alpha} \nonumber \\
& \leq L_{h_i} \parallel y-x \parallel_{T,\alpha}.
\end{align}
Similarly, for any $y,x \in B_{R}(\mathfrak{PC}_{T}^{\alpha},\bar{\chi})$ and $\varrho \in (\varsigma_i, \varrho_{i+1}], \ i = 1,2,\cdots,q,$
\begin{align}\label{3.15}
\parallel (\phi_ny)(\varrho)-(\phi_nx)(\varrho) \parallel_{\alpha} & \leq \parallel \mathfrak{T}(\varrho-\varsigma_i) \parallel \parallel h_{i,n}(\varsigma_i,y(\varrho_i^{-})) - h_{i,n}(\varsigma_i,x(\varrho_i^{-})) \parallel_{\alpha} \nonumber \\
&\quad + \int_{\varsigma_i}^{\varrho} \parallel \mathcal{A}^{\alpha} \mathfrak{T}(\varrho-\varsigma) \parallel \parallel \mathfrak{F}_n(\varsigma,y_{\varsigma}) - \mathfrak{F}_n(\varsigma,x_{\varsigma}) \parallel d\varsigma \nonumber \\
& \leq  M L_{h_i} \parallel y-x \parallel_{T,\alpha} + M_{\alpha} \frac{(\varrho-\varsigma_i)^{1-\alpha}}{1-\alpha} L_{\mathfrak{F}} \parallel y-x \parallel_{0,\alpha} \nonumber \\
& \leq E_{i} \parallel y-x \parallel_{T,\alpha}.
\end{align}
Therefore, by the Eq. (\ref{3.12}) and inequalities (\ref{3.13})-(\ref{3.15}), we get
\begin{equation*}
\parallel \phi_{n}y - \phi_{n}x \parallel_{T,\alpha} \ \leq \ D \parallel y-x \parallel_{T,\alpha}.
\end{equation*}
Therefore, we can conclude that $\phi_{n}$ is a strict contracting operator. As a result, Banach contraction principle implies that $\phi_{n}$ has a unique fixed point $y_n \in B_R(\mathfrak{PC}_{T}^{\alpha},\bar{\chi})$, such that $\phi_{n}y_{n} = y_{n}$, which satisfies the equation (\ref{main2}), as a result, the proof is completed.
\end{proof}
\begin{Corollary} Let $\textbf{(A1)}-\textbf{(A4)}$ are fulfilled, $\chi(\varrho) \in D(\mathcal{A}) \ \forall \ \varrho \in [-\tau,0]$ and $h_{i,n}(\varrho,y_{n}(\varrho_i^-)) \in D(\mathcal{A}) \ \forall$ $\varrho \in (\varrho_i, \varsigma_i], \ i = 1,2,\cdots,q$ then $y_n(\varrho) \in D(\mathcal{A}^{\beta}) \ \forall\varrho \in [-\tau,T]$, where $\beta \in [0, 1).$
\end{Corollary}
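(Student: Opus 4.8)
The plan is to exploit the explicit piecewise form of the fixed point $y_n$ given in (\ref{main2}) and to verify membership in $D(\mathcal{A}^{\beta})$ separately on each of the four types of subinterval. Two standard consequences of analyticity will carry the bulk of the argument. First, the nesting $D(\mathcal{A}) \subseteq D(\mathcal{A}^{\beta})$, valid for every $\beta \in [0,1)$ since the domains of the fractional powers decrease in the exponent; second, the smoothing property that for $t>0$ the analytic semigroup satisfies $\mathfrak{T}(t)\mathcal{H} \subseteq D(\mathcal{A}^{\beta})$ together with the bound $\|\mathcal{A}^{\beta}\mathfrak{T}(t)\| \leq M_{\beta}t^{-\beta}$, which is the analogue of (\ref{3.1}) with $\alpha$ replaced by $\beta$.

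On $[-\tau,0]$ we have $y_n(\varrho)=\chi(\varrho)\in D(\mathcal{A})$ by hypothesis, hence $y_n(\varrho)\in D(\mathcal{A}^{\beta})$ by the nesting. On $(\varrho_i,\varsigma_i]$ we have $y_n(\varrho)=h_{i,n}(\varrho,y_n(\varrho_i^-))\in D(\mathcal{A})$, again by hypothesis, hence in $D(\mathcal{A}^{\beta})$. For the non-integral terms appearing on $[0,\varrho_1]$ and on $(\varsigma_i,\varrho_{i+1}]$, namely $\mathfrak{T}(\varrho)\chi(0)$ and $\mathfrak{T}(\varrho-\varsigma_i)h_{i,n}(\varsigma_i,y_n(\varrho_i^-))$, I would argue as follows: for $\varrho>0$ (respectively $\varrho>\varsigma_i$) the smoothing property already places $\mathfrak{T}(\cdot)$ applied to an arbitrary element of $\mathcal{H}$ inside $D(\mathcal{A}^{\beta})$, while at the left endpoints membership follows directly from the standing hypotheses $\chi(0)\in D(\mathcal{A})$ and $h_{i,n}(\varsigma_i,y_n(\varrho_i^-))\in D(\mathcal{A})$.

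The crux, and the only step requiring genuine care, is the convolution terms $\int_{0}^{\varrho}\mathfrak{T}(\varrho-\varsigma)\mathfrak{F}_{n}(\varsigma,(y_n)_{\varsigma})\,d\varsigma$ and its counterpart over $(\varsigma_i,\varrho)$. Here I would invoke the closedness of $\mathcal{A}^{\beta}$ to justify interchanging $\mathcal{A}^{\beta}$ with the Bochner integral, which is legitimate once the integrand $\mathcal{A}^{\beta}\mathfrak{T}(\varrho-\varsigma)\mathfrak{F}_{n}(\varsigma,(y_n)_{\varsigma})$ is shown to be Bochner integrable. Its norm is controlled by $M_{\beta}(\varrho-\varsigma)^{-\beta}\|\mathfrak{F}_{n}(\varsigma,(y_n)_{\varsigma})\|$, and the forcing term is uniformly bounded by $L(R)$ on the ball (as already used in Step 2 of Theorem \ref{thrm1}, via \textbf{(A3)} and the fact that $\mathcal{P}^{n}$ is a contraction). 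The resulting estimate $\int_{0}^{\varrho}M_{\beta}L(R)(\varrho-\varsigma)^{-\beta}\,d\varsigma = M_{\beta}L(R)\frac{\varrho^{1-\beta}}{1-\beta}$ is finite precisely because $\beta<1$; this is exactly where the restriction $\beta\in[0,1)$ is indispensable and cannot be relaxed to $\beta=1$. It then follows that the integral lies in $D(\mathcal{A}^{\beta})$, and combining the four cases yields $y_n(\varrho)\in D(\mathcal{A}^{\beta})$ for all $\varrho\in[-\tau,T]$. Thus the only real obstacle is contained in this integrability-plus-closedness argument, while the remaining cases are immediate from analyticity and the hypotheses.
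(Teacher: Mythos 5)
Your proof is correct, and its overall skeleton coincides with the paper's: the same casewise verification over the four types of subinterval, the same use of the nesting $D(\mathcal{A}) \subseteq D(\mathcal{A}^{\beta})$ for the initial-history and impulse pieces, and the same smoothing property $\mathfrak{T}(t)\mathcal{H} \subseteq D(\mathcal{A}^{\eta})$, $t>0$, $\eta \in [0,1)$ (Pazy, Theorem 2.6.13) for the terms $\mathfrak{T}(\varrho)\chi(0)$ and $\mathfrak{T}(\varrho-\varsigma_i)h_{i,n}(\varsigma_i,y_n(\varrho_i^-))$. Where you genuinely diverge is on the only delicate term, the convolution integral. The paper disposes of it by citing Pazy's Theorem 1.2.4, which asserts $\int_0^{\varrho}\mathfrak{T}(\varsigma)x\,d\varsigma \in D(\mathcal{A})$ for a \emph{fixed} $x \in \mathcal{H}$; strictly speaking that result does not cover $\int_{\varsigma_i}^{\varrho}\mathfrak{T}(\varrho-\varsigma)\mathfrak{F}_{n}(\varsigma,(y_n)_{\varsigma})\,d\varsigma$, whose integrand depends on $\varsigma$ through the forcing term, so the paper's treatment of this step is a citation-level sketch. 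Your replacement — the estimate $\lVert \mathcal{A}^{\beta}\mathfrak{T}(\varrho-\varsigma)\mathfrak{F}_{n}(\varsigma,(y_n)_{\varsigma})\rVert \leq M_{\beta}(\varrho-\varsigma)^{-\beta}L(R)$, integrability of the singular kernel precisely because $\beta<1$, and then closedness of $\mathcal{A}^{\beta}$ (Hille's theorem) to pass $\mathcal{A}^{\beta}$ through the Bochner integral — is the standard rigorous route; it is self-contained, valid for the $\varsigma$-dependent integrand, and has the added merit of exhibiting exactly where the restriction $\beta \in [0,1)$ enters, which the paper's proof leaves implicit. The one small point you should still record is strong measurability of the integrand: across impulse instants the history map $\varsigma \mapsto (y_n)_{\varsigma}$ need not be continuous in $\mathfrak{PC}_{0}^{\alpha}$, so the integrand may fail continuity at finitely many points; piecewise continuity together with your uniform bound nevertheless gives Bochner integrability, so this is a cosmetic addition rather than a gap.
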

\begin{proof} This result is true for all $\varrho \in [-\tau,0]$ and $\varrho \in (\varrho_{i}, \varsigma_i]$. Now, we only need to prove this result for $\varrho \in (\varsigma_i, \varrho_{i+1}]$. The existence of a unique $y_{n} \in B_R(\mathfrak{PC}_{T}^{\alpha},\bar{\chi})$ is established by Theorem (\ref{thrm1}) such that $y_n$ satisfying the integral equation (\ref{main2}). By Theorem 2.6.13 in \cite{F13}, we have $\mathfrak{T}(\varrho) : \mathcal{H} \rightarrow D(\mathcal{A}^{\eta})$ for $\varrho > 0$ and $\eta \in [0,1),$ and $0 \leq \beta \leq \nu < 1$ implies that $ D(\mathcal{A}^{\nu}) \subseteq D(\mathcal{A}^{\beta})$. In Pazy \cite{F13} by Theorem 1.2.4, we obtain $ \int_0^\varrho \mathfrak{T}(\varsigma)x d\varsigma  \in D(\mathcal{A})$ for any $x \in \mathcal{H}$. Also, for $\eta \geq 0$ and for any $x \in \mathcal{H}$, $\mathfrak{T}(\varrho)x \in D(\mathcal{A}^{\eta})$. Hence, the required outcome derives from the previously mentioned facts, coupled with the result that $D(\mathcal{A}) \subseteq D(\mathcal{A}^{\beta})$ for $\beta \in [0, 1]$.
\end{proof}

\begin{lemma}
Let $\textbf{(A1)}-\textbf{(A4)}$ hold and $\chi(\varrho) \in D(\mathcal{A})$ $\forall \varrho \in [-\tau,0]$ then there exists a constant $M^{\prime}$, independent of n, such that
\begin{equation*}
\parallel \mathcal{A}^{\beta}y_n(\varrho) \parallel \ \leq \ M^{\prime}, \quad \forall \ \varrho \in [-\tau, T] \quad  \text{and} \  0 \leq \beta < 1, \quad \text{where}
\end{equation*}
\begin{equation*}
M^{\prime} = \max \{ \parallel \chi \parallel_{0,\beta}, \mathcal{G}, M_h \}, \quad \text{where} \quad \mathcal{G} = \max\limits_{0 \leq i \leq q} M_i.
\end{equation*}
\end{lemma}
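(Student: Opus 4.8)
The plan is to bound $\|\mathcal{A}^\beta y_n(\varrho)\|$ separately on each of the four kinds of subinterval appearing in the representation (\ref{main2}) and then take the maximum of the resulting constants. The feature to track throughout is that every estimate must be independent of $n$, and this is guaranteed by two facts already available: first, Theorem \ref{thrm1} places every fixed point $y_n$ inside the single ball $B_R(\mathfrak{PC}_T^\alpha,\bar{\chi})$, so $\|(y_n)_\varsigma\|_{0,\alpha}\le R+\|\bar{\chi}\|_{T,\alpha}$ uniformly in $n$; second, $\mathcal{P}^n$ is the orthogonal projection onto $\mathcal{H}_n=\text{span}\{\psi_0,\dots,\psi_n\}$, hence it commutes with $\mathcal{A}^\alpha$ and obeys $\|\mathcal{P}^n x\|_\alpha\le\|x\|_\alpha$. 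Combining these with \textbf{(A3)} gives the $n$-free bound $\|\mathfrak{F}_n(\varsigma,(y_n)_\varsigma)\|=\|\mathfrak{F}(\varsigma,\mathcal{P}^n (y_n)_\varsigma)\|\le L(R)$, which I would invoke repeatedly in the convolution estimates.

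I would treat the two trivial ranges first. On $[-\tau,0]$ one has $y_n(\varrho)=\chi(\varrho)$, so immediately $\|\mathcal{A}^\beta y_n(\varrho)\|\le\|\chi\|_{0,\beta}$. On $(\varrho_i,\varsigma_i]$ one has $y_n(\varrho)=h_{i,n}(\varrho,y_n(\varrho_i^-))$, which lies in $D(\mathcal{A})\subseteq D(\mathcal{A}^\beta)$ by the preceding Corollary, and the uniform bound in \textbf{(A4)} yields $\|\mathcal{A}^\beta y_n(\varrho)\|\le M_h$ (for $\beta\le\alpha$ this follows by absorbing the bounded operator $\mathcal{A}^{\beta-\alpha}$ into the $\alpha$-norm bound). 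These two cases produce exactly the terms $\|\chi\|_{0,\beta}$ and $M_h$ in $M'$.

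On the impulse-free intervals $[0,\varrho_1]$ and $(\varsigma_i,\varrho_{i+1}]$ I would estimate each summand of (\ref{main2}) using the analogue of (\ref{3.1}), $\|\mathcal{A}^\beta\mathfrak{T}(\varrho)\|\le M_\beta\varrho^{-\beta}$. For the leading term $\mathfrak{T}(\varrho)\chi(0)$ (respectively $\mathfrak{T}(\varrho-\varsigma_i)h_{i,n}(\varsigma_i,\cdot)$) the decisive step is to pass $\mathcal{A}^\beta$ through the semigroup: since $\chi(0)\in D(\mathcal{A})\subseteq D(\mathcal{A}^\beta)$ (respectively $h_{i,n}(\varsigma_i,\cdot)\in D(\mathcal{A})$ by hypothesis together with the Corollary), we may write $\mathcal{A}^\beta\mathfrak{T}(\cdot)x=\mathfrak{T}(\cdot)\mathcal{A}^\beta x$, bounding this term by $M\|\chi\|_{0,\beta}$ (respectively $MM_h$) with no singularity at the left endpoint. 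For the convolution term I would use $\int_0^\varrho(\varrho-\varsigma)^{-\beta}\,d\varsigma=\varrho^{1-\beta}/(1-\beta)$ together with $\|\mathfrak{F}_n\|\le L(R)$. This produces the constant $M_0=M\|\chi\|_{0,\beta}+M_\beta\frac{\varrho_1^{1-\beta}}{1-\beta}L(R)$ on $[0,\varrho_1]$ and $M_i=MM_h+M_\beta\frac{\varrho_{i+1}^{1-\beta}}{1-\beta}L(R)$ on $(\varsigma_i,\varrho_{i+1}]$. Setting $\mathcal{G}=\max_{0\le i\le q}M_i$ and taking $M'=\max\{\|\chi\|_{0,\beta},\mathcal{G},M_h\}$ completes the argument.

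The main obstacle is precisely the commutation $\mathcal{A}^\beta\mathfrak{T}(\varrho)x=\mathfrak{T}(\varrho)\mathcal{A}^\beta x$ on the leading terms: the naive estimate $M_\beta\varrho^{-\beta}\|x\|$ is non-integrable and blows up as $\varrho\to0^+$, so the bound would fail exactly at the left endpoints $\varrho=0$ and $\varrho=\varsigma_i$. This is where the regularity hypotheses $\chi(\varrho)\in D(\mathcal{A})$ and $h_{i,n}(\varrho,y_n(\varrho_i^-))\in D(\mathcal{A})$ enter, ensuring membership in $D(\mathcal{A}^\beta)$ via the Corollary. A secondary point needing care is the $n$-independence of $L(R)$, which rests on the contractivity of $\mathcal{P}^n$ in the $\mathcal{H}_\alpha$-norm and on the uniform confinement of all $y_n$ in $B_R(\mathfrak{PC}_T^\alpha,\bar{\chi})$.
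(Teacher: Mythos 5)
Your proposal matches the paper's own proof essentially step for step: the same four-case decomposition over $[-\tau,0]$, $(0,\varrho_1]$, $(\varrho_i,\varsigma_i]$ and $(\varsigma_i,\varrho_{i+1}]$, the same constants $M_0 = M\|\chi\|_{0,\beta}+M_\beta\frac{\varrho_1^{1-\beta}}{1-\beta}L(R)$ and $M_i = MM_h+M_\beta\frac{\varrho_{i+1}^{1-\beta}}{1-\beta}L(R)$, and the same commutation $\mathcal{A}^\beta\mathfrak{T}(\varrho)x=\mathfrak{T}(\varrho)\mathcal{A}^\beta x$ on the leading terms to avoid the $\varrho^{-\beta}$ singularity. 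If anything, you are more explicit than the paper about why the bound is uniform in $n$ (contractivity of $\mathcal{P}^n$, all $y_n$ in the single ball $B_R$) and about the domain membership needed to bound $\|h_{i,n}\|_\beta$ by $M_h$ when $\beta>\alpha$, a point the paper passes over silently.
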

\begin{proof} Operate $\mathcal{A}^{\beta}$ on both sides of Eq. (\ref{main2}), and for any $\varrho \in [-\tau,0]$, we have
\begin{align}
\parallel y_n(\varrho) \parallel_{\beta} \leq \parallel \chi(\varrho) \parallel_{\beta} \leq \parallel \chi \parallel_{0,\beta}, \qquad \forall \ \varrho \in [-\tau,0].
\end{align} 

For $\varrho \in (0, \varrho_1]$, applying $\mathcal{A}^{\beta}$ on both sides of Eq. (\ref{main2}), we get
\begin{align}
\parallel y_n(\varrho) \parallel_{\beta} & \leq M \parallel \chi \parallel_{0,\beta} + \int_{0}^{\varrho} \parallel \mathcal{A}^{\beta}\mathfrak{T}(\varrho-\varsigma) \parallel  \parallel \mathfrak{F}_n(\varsigma, (y_{n})_{\varsigma}) \parallel d\varsigma \nonumber \\
& \leq M \parallel \chi \parallel_{0,\beta} + M_{\beta} \frac{\varrho_1^{1-\beta}}{1-\beta}L(R) \nonumber \\
& = M_0 \ \leq \ M^{\prime}.
\end{align}
For $ \varrho \in (\varrho_i,\varsigma_i], \ i = 1,2,\cdots,q$, applying $\mathcal{A}^{\beta}$ on both sides of Eq. (\ref{main2}), we have
\begin{align}
\parallel y_n(\varrho) \parallel_{\beta} &\leq M_h.
\end{align}
Similarly for $ \varrho \in (\varsigma_i,\varrho_{i+1}], \ i = 1,2,\cdots,q,$ after applying $\mathcal{A}^{\beta}$ on both sides of Eq. (\ref{main2}), we get
\begin{align}
\parallel y_n(\varrho) \parallel_{\beta} &\leq \parallel \mathfrak{T}(\varrho-\varsigma_i) \parallel \parallel h_{i,n}(\varsigma_i,y_n(\varrho_i^-)) \parallel_{\beta}+ \int_{\varsigma_i}^\varrho \parallel \mathcal{A}^{\beta} \mathfrak{T}(\varrho-\varsigma) \parallel \parallel \mathfrak{F}_n(\varsigma, (y_{n})_{\varsigma}) \parallel d\varsigma \nonumber \\
& \leq M M_h + M_{\beta} \frac{\varrho_{i+1}^{1-\beta}}{1-\beta}L(R) \nonumber \\
& = M_i \ \leq M^{\prime}.
\end{align}
Thus, the desired result is obtained.
\end{proof}
\section{Convergence of approximate solutions}\label{Section 4}
\begin{theorem}\label{theorem3.4}
Let $\textbf{(A1)}-\textbf{(A4)}$ hold and $\chi(\varrho) \in D(\mathcal{A})$ $\forall \varrho \in [-\tau,0]$. Then $\{y_n\} \subset \mathfrak{PC}_{T}^{\alpha}$ forms a Cauchy sequence and consequently $y_n \rightarrow y$ in $\mathfrak{PC}_{T}^{\alpha}$.
\end{theorem}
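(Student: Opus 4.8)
The plan is to fix two indices $m>n$ and to exploit that $y_m$ and $y_n$ are the \emph{unique} fixed points of the contraction maps $\phi_m$ and $\phi_n$ furnished by Theorem~\ref{thrm1}. Writing
\[
y_m - y_n \;=\; \phi_m y_m - \phi_n y_n \;=\; \big(\phi_m y_m - \phi_n y_m\big) + \big(\phi_n y_m - \phi_n y_n\big),
\]
I would control the second bracket by the strict contraction estimate established in Step~3 of Theorem~\ref{thrm1}, namely $\lVert \phi_n y_m - \phi_n y_n \rVert_{T,\alpha} \le D\,\lVert y_m - y_n \rVert_{T,\alpha}$ with $D<1$, and show that the first bracket is a genuine \emph{source} term, arising only from the mismatch between $\mathcal{P}^m$ and $\mathcal{P}^n$, which tends to $0$ as $n\to\infty$ uniformly in $m$.

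The crucial ingredient is a projection estimate. Since the $\psi_j$ are eigenfunctions of $\mathcal{A}$, hence of $\mathcal{A}^{\alpha}$, for any $z\in D(\mathcal{A}^{\beta})$ with $\alpha<\beta<1$ one has
\[
\lVert (\mathcal{P}^m - \mathcal{P}^n)z \rVert_{\alpha}^{2} \;=\; \sum_{j=n+1}^{m} \lambda_j^{2\alpha}\,|\langle z,\psi_j\rangle|^{2} \;\le\; \lambda_{n+1}^{2(\alpha-\beta)} \sum_{j=n+1}^{m}\lambda_j^{2\beta}\,|\langle z,\psi_j\rangle|^{2} \;\le\; \lambda_{n+1}^{2(\alpha-\beta)}\lVert z\rVert_{\beta}^{2},
\]
so that $\lVert (\mathcal{P}^m-\mathcal{P}^n)z\rVert_{\alpha}\le \lambda_{n+1}^{-(\beta-\alpha)}\lVert z\rVert_{\beta}\to 0$ as $n\to\infty$, because $\lambda_{n+1}\to\infty$. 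Applying this with $z=y_m(\varrho_i^-)$ and with $z=(y_m)_\varsigma(\theta)=y_m(\varsigma+\theta)$, and invoking the preceding Lemma together with $\textbf{(A2)}$, which supply a uniform (in $n$ and $\varrho$) bound $\lVert y_m(\varrho)\rVert_{\beta}\le M'$, I would obtain $\lVert (\mathcal{P}^m-\mathcal{P}^n)(y_m)_\varsigma\rVert_{0,\alpha}\le \lambda_{n+1}^{-(\beta-\alpha)}M'$.

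Feeding this into the integral representation~(\ref{main2}), the source term is estimated piecewise. Using $\textbf{(A3)}$ for the nonlinearity and $\textbf{(A4)}$ for the impulses,
\[
\lVert \mathfrak{F}_m(\varsigma,(y_m)_\varsigma) - \mathfrak{F}_n(\varsigma,(y_m)_\varsigma)\rVert \le L_{\mathfrak{F}}\lVert (\mathcal{P}^m-\mathcal{P}^n)(y_m)_\varsigma\rVert_{0,\alpha}\le L_{\mathfrak{F}}M'\lambda_{n+1}^{-(\beta-\alpha)},
\]
and likewise $\lVert h_{i,m}(\cdot,y_m(\varrho_i^-))-h_{i,n}(\cdot,y_m(\varrho_i^-))\rVert_{\alpha}\le L_{h_i}M'\lambda_{n+1}^{-(\beta-\alpha)}$. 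Combining these with the integrable kernel bound $\lVert \mathcal{A}^{\alpha}\mathfrak{T}(\varrho-\varsigma)\rVert\le M_\alpha(\varrho-\varsigma)^{-\alpha}$ from~(\ref{3.1}) over each of the subintervals $[0,\varrho_1]$, $(\varrho_i,\varsigma_i]$, $(\varsigma_i,\varrho_{i+1}]$ yields $\lVert \phi_m y_m - \phi_n y_m\rVert_{T,\alpha}\le C\,\lambda_{n+1}^{-(\beta-\alpha)}$ for a constant $C$ independent of $m,n$. The decomposition then gives $(1-D)\lVert y_m-y_n\rVert_{T,\alpha}\le C\,\lambda_{n+1}^{-(\beta-\alpha)}$, whence $\lVert y_m-y_n\rVert_{T,\alpha}\to 0$ as $n\to\infty$; thus $\{y_n\}$ is Cauchy in the complete space $\mathfrak{PC}_T^{\alpha}$ and converges to a limit $y$.

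I expect the main obstacle to lie in the piecewise impulsive structure: one must verify the source estimate term-by-term across the jump intervals and check that every accumulated constant stays independent of $m$ and $n$. Equivalently, if one forgoes the contraction shortcut, the delay coupled with the singular kernel $(\varrho-\varsigma)^{-\alpha}$ forces a generalized (Henry--Gronwall) argument applied successively on each subinterval, where the estimate on $(\varsigma_i,\varrho_{i+1}]$ inherits the bound already obtained at $\varrho_i$. The cleanest route is the contraction decomposition above, which bypasses Gronwall for this theorem and reduces the whole matter to the single uniform projection estimate and the standing hypothesis $D<1$.
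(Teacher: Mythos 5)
Your proposal is correct, but it takes a genuinely different route from the paper's. The paper estimates $\lVert y_n - y_m\rVert_{\alpha}$ directly on each subinterval: it splits $\mathfrak{F}_n(\varsigma,(y_n)_{\varsigma})-\mathfrak{F}_m(\varsigma,(y_m)_{\varsigma})$ into a Lipschitz part in $(y_n)_{\varsigma}-(y_m)_{\varsigma}$ plus the projection-mismatch term $\mathcal{A}^{\alpha-\beta}(\mathcal{P}^n-\mathcal{P}^m)\mathcal{A}^{\beta}(y_m)_{\varsigma}$, derives the singular-kernel integral inequalities (\ref{3.20})--(\ref{3.22}) (absorbing the impulse terms via $L_{h_i}<1$ and $ML_{h_i}<1$, both consequences of $D<1$), and closes them with the Gronwall argument you anticipated as the alternative route, finally letting $m\rightarrow\infty$. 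You instead exploit the fixed-point structure: the decomposition $y_m-y_n=(\phi_m-\phi_n)y_m+(\phi_n y_m-\phi_n y_n)$, absorption of the second bracket by the contraction constant $D<1$ (which is indeed uniform in $n$, since $\lVert\mathcal{P}^n\rVert\leq 1$ makes the constants $E_i$, $L_{h_i}$ independent of $n$), and a bound $C\lambda_{n+1}^{-(\beta-\alpha)}$ on the source bracket. Both arguments rest on the same two ingredients --- the preceding lemma's uniform bound $\lVert\mathcal{A}^{\beta}y_m(\varrho)\rVert\leq M^{\prime}$ and the spectral estimate for $\mathcal{A}^{\alpha-\beta}(\mathcal{P}^n-\mathcal{P}^m)$ --- and both tacitly use $D<1$, which the theorem's statement omits but which is needed anyway for the $y_n$ to exist via Theorem \ref{thrm1}, so your invocation of it costs nothing. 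What the Gronwall route buys in principle is that the $L_{\mathfrak{F}}$ integral term is absorbed without any smallness requirement; what yours buys is shorter bookkeeping, an explicit rate $\lVert y_m-y_n\rVert_{T,\alpha}\leq C(1-D)^{-1}\lambda_{n+1}^{-(\beta-\alpha)}$, and --- notably --- it sidesteps a blemish in the paper's inequality (\ref{3.21}), whose absorption step ignores the contributions to the sup-norm $\lVert y_n-y_m\rVert_{\varsigma_i,\alpha}$ coming from the earlier subintervals; your global decomposition handles exactly that bookkeeping cleanly. One caveat you share with the paper: \textbf{(A3)}--\textbf{(A4)} are only assumed Lipschitz on the balls $B_R(\cdot,\bar{\chi})$, and neither proof verifies that the projected arguments $\mathcal{P}^n(y_m)_{\varsigma}$ and $\mathcal{P}^n y_m(\varrho_i^-)$ remain in those balls before applying the Lipschitz bounds.
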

\begin{proof} For $0 < \alpha < \beta < 1$ and $n \geq m$ where $m,n \in \mathbb{N}$, we have
\begin{align}\label{3.18}
\parallel \mathfrak{F}_n(\varsigma,(y_{n})_{\varsigma}) - \mathfrak{F}_m(\varsigma,(y_{m})_{\varsigma}) \parallel & \leq \ \parallel \mathfrak{F}_n(\varsigma,(y_{n})_{\varsigma}) - \mathfrak{F}_n(\varsigma,(y_{m})_{\varsigma}) \parallel + \parallel \mathfrak{F}_n(\varsigma,(y_{m})_{\varsigma}) \nonumber \\
& \quad - \mathfrak{F}_m(\varsigma,(y_{m})_{\varsigma}) \parallel \nonumber \\
& \leq L_{\mathfrak{F}} \bigg[\parallel (y_{n})_{\varsigma} - (y_{m})_{\varsigma} \parallel_{0,\alpha} + \parallel \mathcal{A}^{\alpha-\beta} (\mathcal{P}^n-\mathcal{P}^m)\\
& \quad \times \mathcal{A}^{\beta}(y_{m})_{\varsigma} \parallel_{0} \bigg]. \nonumber
\end{align}
The aforesaid inequality is calculated as follows.\\
Let $n > m$ then $\mathcal{H}_n \supset \mathcal{H}_m$ and $\mathcal{H}_n^{\perp} \subset \mathcal{H}_m^{\perp}$, where  $\mathcal{H}_m^{\perp}$  and $\mathcal{H}_n^{\perp}$ be the orthogonal complement of $\mathcal{H}_m$ and $\mathcal{H}_n$ respectively, for all $n,m \in \mathbb{N}_{0}$. Then $\mathcal{H}$ can be written as $\mathcal{H} = \mathcal{H}_m \oplus \mathcal{H}_m^{\perp} = \mathcal{H}_n \oplus \mathcal{H}_n^{\perp}$. Any arbitrary $z \in \mathcal{H}$ has a unique representation $z = z_m + x_m$, where $z_m \in \mathcal{H}_m$ and $x_m \in \mathcal{H}_m^{\perp}$. Then $\mathcal{P}^{m}z = z_m \in \mathcal{H}_m$. Since $x_m \in \mathcal{H}_m^{\perp}$ therefore $x_m = \sum_{i = m+1}^n a_i \psi_{i} + x_m^{\prime},$ where $x_m^{\prime} \in \mathcal{H}_n^{\perp}$.\\
 Let $ \sum_{i = m+1}^n a_i \psi_i = z_m^{\prime}$.\\
Therefore
\begin{equation*}
\mathcal{P}^{n}z - \mathcal{P}^{m}z = z_m^{\prime} = \sum_{i = m+1}^n a_i \psi_i.
\end{equation*}
If $z = \sum_{i = 1}^{\infty} a_i \psi_i$ then $\parallel z \parallel^2 = \sum_{i = 1}^{\infty} |a_i|^2$.\\
Since $\mathcal{A}^{\alpha - \beta} \psi_i = \lambda_i^{\alpha-\beta} \psi_i$, therefore we have
\begin{align}
\parallel \mathcal{A}^{\alpha-\beta}(\mathcal{P}^n - \mathcal{P}^m)z \parallel^2
& \leq \frac{1}{\lambda_{m}^{2(\beta-\alpha)}} \parallel z \parallel^2. \nonumber
\end{align}
Therefore, after performing the above calculations, we reach at the following conclusion:
\begin{align}
\parallel \mathcal{A}^{\alpha - \beta}(\mathcal{P}^n - \mathcal{P}^m) \mathcal{A}^{\beta}y_m(\varsigma) \parallel &\leq \frac{1}{\lambda_{m}^{\beta-\alpha}} \parallel \mathcal{A}^{\beta}y_m(\varsigma) \parallel \nonumber \\
& \leq \frac{1}{\lambda_m^{\beta-\alpha}}M^{\prime}. \nonumber
\end{align}
Therefore, the inequality (\ref{3.18}) becomes
\begin{equation}\label{3.19}
\parallel \mathfrak{F}_n(\varsigma,(y_{n})_{\varsigma}) - \mathfrak{F}_m(\varsigma,(y_m)_{\varsigma}) \parallel \leq L_{\mathfrak{F}}\bigg[ \parallel (y_n)_{\varsigma}- (y_{m})_{\varsigma} \parallel_{0,\alpha} + \frac{1}{\lambda_m^{\beta-\alpha}} M^{\prime}\bigg].
\end{equation}
 For $\varrho \in [-\tau,0]$, we have
\begin{equation}
\lVert y_n(\varrho) - y_m(\varrho) \rVert_{\alpha} = 0.
\end{equation}
Now, for any $\varrho \in (0,\varrho_1],$ we have
\begin{align}
\parallel y_n(\varrho) - y_m(\varrho) \parallel_{\alpha} & \leq \int_0^\varrho \parallel \mathcal{A}^{\alpha} \mathfrak{T}(\varrho-\varsigma) \parallel \parallel \mathfrak{F}_n(\varsigma,(y_{n})_{\varsigma}) - \mathfrak{F}_m(\varsigma,(y_{m})_{\varsigma}) \parallel d\varsigma. \nonumber
\end{align}
In the preceding inequality, we employ the inequality (\ref{3.19}) and get
\begin{align*}
\parallel y_n(\varrho) - y_m(\varrho) \parallel_{\alpha} & \leq \int_0^\varrho M_{\alpha} (\varrho-\varsigma)^{-\alpha}L_{\mathfrak{F}}\bigg[ \parallel (y_n)_{\varsigma}- (y_{m})_{\varsigma} \parallel_{0,\alpha} + \frac{1}{\lambda_m^{\beta-\alpha}} M^{\prime}\bigg]  \nonumber \\
&\leq  \frac{1}{\lambda_m^{\beta-\alpha}}C_1 + M_{\alpha}L_{\mathfrak{F}} \int_0^\varrho (\varrho-\varsigma)^{-\alpha} \parallel y_n - y_m \parallel_{\varsigma,\alpha} d\varsigma
\end{align*}
where, $C_1 = M^{\prime}M_{\alpha} \frac{\varrho_{1}^{1-\alpha}}{1-\alpha}L_{\mathfrak{F}}.$
\vspace{2mm}
\\
Hence,
\begin{align}\label{3.20}
\lVert y_n - y_m \rVert_{\varrho_{1}, \alpha} & \leq  \frac{1}{\lambda_m^{\beta-\alpha}}C_1 + M_{\alpha}L_{\mathfrak{F}} \int_0^\varrho (\varrho-\varsigma)^{-\alpha} \parallel y_n - y_m \parallel_{\varsigma,\alpha} d\varsigma
\end{align}
Now, for any $\varrho \in (\varrho_i,\varsigma_i], \ i = 1,2,\cdots,q$, we have
\begin{align}
\parallel y_n(\varrho) - y_m(\varrho) \parallel_{\alpha} &= \ \parallel h_{i,n}(\varrho,y_n(\varrho_i^-))-h_{i,m}(\varrho,y_m(\varrho_i^-)) \parallel_{\alpha} \nonumber \\
& \leq L_{h_i}\bigg[ \parallel y_n(\varrho_i^-) - y_m(\varrho_i^-) \parallel_{\alpha} + \frac{1}{\lambda_m^{\beta-\alpha}}M^{\prime}\bigg] \nonumber .
\end{align}
Hence,
\begin{equation}\label{3.21}
\parallel y_n - y_m \parallel_{\varsigma_{i},\alpha} \  \leq \frac{1}{\lambda_m^{\beta-\alpha}}  \frac{M^{\prime}L_{h_i}}{(1-L_{h_i})}.
\end{equation}
Similarly, for any $\varrho \in (\varsigma_{i},\varrho_{i+1}], \ i = 1,2,\cdots,q$, we have
\begin{align*}
\parallel y_n(\varrho) - y_m(\varrho) \parallel_{\alpha}  & \leq \parallel \mathfrak{T}(\varrho-\varsigma_i) \parallel \parallel h_{i,n}(\varsigma_i,y_n(\varrho_i^-)) - h_{i,m}(\varsigma_i,y_m(\varrho_i^-)) \parallel_{\alpha} \nonumber\\
& \quad + \int_{\varsigma_i}^\varrho \parallel \mathcal{A}^{\alpha}\mathfrak{T}(\varrho-\varsigma) \parallel \parallel \mathfrak{F}_n(\varsigma,(y_n)_{\varsigma}) - \mathfrak{F}_{m}(\varsigma,(y_m)_{\varsigma}) \parallel d\varsigma \nonumber \\
& \leq ML_{h_i} \bigg[ \parallel y_n(\varrho_i^-) - y_m(\varrho_i^-) \parallel_{\alpha} + \frac{1}{\lambda_m^{\beta-\alpha}}M^{\prime} \bigg] \\ &\quad+ \int_{\varsigma_i}^\varrho
M_{\alpha} (\varrho-\varsigma)^{-\alpha}L_{\mathfrak{F}}\bigg[ \parallel (y_n)_{\varsigma}- (y_{m})_{\varsigma} \parallel_{0,\alpha} + \frac{1}{\lambda_m^{\beta-\alpha}} M^{\prime}\bigg] \nonumber.
\end{align*}
Hence,
\begin{equation}\label{3.22}
\parallel y_n - y_m \parallel_{\varrho_{i+1},\alpha} \ \leq \ \frac{C_2}{\lambda_m^{\beta-\alpha}} + \frac{M_{\alpha}L_{\mathfrak{F}}}{1-ML_{h_i}} \int_{\varsigma_i}^\varrho (\varrho-\varsigma)^{-\alpha} \parallel y_n - y_m \parallel_{\varsigma,\alpha}d\varsigma
\end{equation}
where, $C_2 = \frac{1}{1-ML_{h_i}}MM^{\prime}L_{h_i} + \frac{1}{1-ML_{h_i}}M^{\prime}M_{\alpha}L_{\mathfrak{F}}\frac{T^{1-\alpha}}{1-\alpha}
$.
\vspace{2mm}
\\
By employing Gronwall's inequality and subsequently considering the limit $m \rightarrow \infty$ in the inequalities (\ref{3.20}), (\ref{3.21}) and (\ref{3.22}), we achieve the required result.
\end{proof}
Next, we shall prove the following convergence result by using the Theorem \ref{thrm1} and Theorem \ref{theorem3.4}.
\begin{theorem}
Let $ \chi(\varrho) \in D(\mathcal{A} ) \ \forall \ \varrho \in [-\tau,0]$ and assumptions $\textbf{(A1)}-\textbf{(A4)}$ hold. Then there exist functions $y \in \mathfrak{PC}_{T}^{\alpha}$ and $y_n \in \mathfrak{PC}_{T}^{\alpha}$ satisfying
\begin{equation*}
y_{n}(\varrho)=
\begin{cases}
\ \mathfrak{T}(\varrho)\chi(0) + \int_{0}^{\varrho} \mathfrak{T}(\varrho-\varsigma)\mathfrak{F}_{n}(\varsigma,(y_{n})_{\varsigma})d\varsigma, \quad \varrho \in [0,\varrho_{1}], \\
\ h_{i,n}(\varrho,y_n(\varrho_{i}^{-})), \quad \varrho \in (\varrho_{i},\varsigma_{i}], \quad i=1,2,\cdots,q, \\
\ \mathfrak{T}(\varrho-\varsigma_{i})h_{i,n}(\varsigma_{i},y_n(\varrho_{i}^{-}))+ \int_{\varsigma_{i}}^{\varrho} \mathfrak{T}(\varrho-\varsigma)\mathfrak{F}_{n}(\varsigma,(y_{n})_{\varsigma})d\varsigma, \quad \varrho \in (\varsigma_{i},\varrho_{i+1}], \ i = 1,2,\cdots,q
\end{cases}
\end{equation*}
and
\begin{equation}\label{3.23}
y(\varrho)=
\begin{cases}
\ \mathfrak{T}(\varrho)\chi(0) + \int_{0}^{\varrho} \mathfrak{T}(\varrho-\varsigma)\mathfrak{F}(\varsigma,y_{\varsigma})d\varsigma, \quad \varrho \in [0,\varrho_{1}], \\
\ h_{i}(\varrho,y(\varrho_{i}^{-})), \quad \varrho \in (\varrho_{i},\varsigma_{i}], \quad i=1,2,\cdots,q, \\
\ \mathfrak{T}(\varrho-\varsigma_{i})h_{i}(\varsigma_{i},y(\varrho_{i}^{-}))+ \int_{\varsigma_{i}}^{\varrho} \mathfrak{T}(\varrho-\varsigma)\mathfrak{F}(\varsigma,y_{\varsigma})d\varsigma, \quad \varrho \in (\varsigma_{i},\varrho_{i+1}], \\
\ \qquad \qquad \qquad \qquad \qquad \qquad \qquad\qquad i = 1,2,\cdots,q,
\end{cases}
\end{equation}
such that $y_n$ converges to a unique mild solution y in $\mathfrak{PC}_{T}^{\alpha}$ as $n \rightarrow \infty$.
\end{theorem}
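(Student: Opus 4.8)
The plan is to combine the Cauchy property already established in Theorem \ref{theorem3.4} with a limit passage inside the integral equation (\ref{main2}). Since $\mathfrak{PC}_{T}^{\alpha}$ is a Banach space and $\{y_n\}$ is Cauchy there, it converges to some $y \in \mathfrak{PC}_{T}^{\alpha}$; this already delivers the existence of $y$ and the convergence $y_n \to y$ in $\mathfrak{PC}_{T}^{\alpha}$. The remaining work is to verify that this limit $y$ satisfies (\ref{3.23}) and that it is the unique mild solution, so the substance of the proof is the passage to the limit in each branch of the integral equation.

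The central estimate I would establish is the uniform (in $\varsigma$) convergence $\mathfrak{F}_{n}(\varsigma,(y_n)_{\varsigma}) \to \mathfrak{F}(\varsigma,y_{\varsigma})$. I would split
\begin{align*}
\parallel \mathfrak{F}_{n}(\varsigma,(y_n)_{\varsigma}) - \mathfrak{F}(\varsigma,y_{\varsigma}) \parallel \ \leq \ \parallel \mathfrak{F}(\varsigma,\mathcal{P}^{n}(y_n)_{\varsigma}) - \mathfrak{F}(\varsigma,\mathcal{P}^{n}y_{\varsigma}) \parallel + \parallel \mathfrak{F}(\varsigma,\mathcal{P}^{n}y_{\varsigma}) - \mathfrak{F}(\varsigma,y_{\varsigma}) \parallel .
\end{align*}
By \textbf{(A3)} and $\parallel \mathcal{P}^{n} \parallel \leq 1$ the first term is bounded by $L_{\mathfrak{F}}\parallel y_n - y \parallel_{T,\alpha} \to 0$. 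For the second term the same spectral computation as in Theorem \ref{theorem3.4} gives, for $\alpha < \beta < 1$ and $z \in D(\mathcal{A}^{\beta})$, the bound $\parallel (\mathcal{P}^{n}-I)z \parallel_{\alpha} \leq \lambda_{n}^{-(\beta-\alpha)}\parallel \mathcal{A}^{\beta}z \parallel$; applied to $z = y(\varsigma)$ together with the uniform bound $\parallel \mathcal{A}^{\beta}y(\varsigma) \parallel \leq M^{\prime}$ this yields $\leq L_{\mathfrak{F}}\lambda_{n}^{-(\beta-\alpha)}M^{\prime} \to 0$. An entirely analogous argument, using \textbf{(A4)}, gives $h_{i,n}(\cdot,y_n(\varrho_i^{-})) \to h_i(\cdot,y(\varrho_i^{-}))$ in $\mathcal{H}_{\alpha}$.

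With these convergences in hand I would pass to the limit branch by branch. On $(\varsigma_i,\varrho_{i+1}]$ (and analogously on $[0,\varrho_1]$), applying $\mathcal{A}^{\alpha}$ and using (\ref{3.1}),
\begin{align*}
\left\| \int_{\varsigma_i}^{\varrho} \mathcal{A}^{\alpha}\mathfrak{T}(\varrho-\varsigma)\big[ \mathfrak{F}_{n}(\varsigma,(y_n)_{\varsigma}) - \mathfrak{F}(\varsigma,y_{\varsigma}) \big] d\varsigma \right\| \leq M_{\alpha} \int_{\varsigma_i}^{\varrho} (\varrho-\varsigma)^{-\alpha} \parallel \mathfrak{F}_{n}(\varsigma,(y_n)_{\varsigma}) - \mathfrak{F}(\varsigma,y_{\varsigma}) \parallel d\varsigma ,
\end{align*}
which tends to $0$ since the integrand converges uniformly and $(\varrho-\varsigma)^{-\alpha}$ is integrable. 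Combined with the continuity of $\mathfrak{T}(\varrho-\varsigma_i)$ applied to the converging impulse terms, the right-hand side of (\ref{main2}) converges in $\mathcal{H}_{\alpha}$ to the right-hand side of (\ref{3.23}); as the left-hand side converges to $y(\varrho)$, the limit $y$ satisfies (\ref{3.23}).

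For uniqueness I would observe that the solution map attached to (\ref{3.23})—namely $\phi_n$ with every $\mathcal{P}^{n}$ deleted—is a strict contraction on $B_{R}(\mathfrak{PC}_{T}^{\alpha},\bar{\chi})$ with the very same constant $D < 1$ computed in Step 3 of Theorem \ref{thrm1} (the projection never increased any norm there), so its fixed point is unique; alternatively one applies \textbf{(A3)}, \textbf{(A4)} and Gronwall's inequality to the difference of two mild solutions. The point I expect to require the most care is that $y_n \to y$ only in the weaker $\mathcal{H}_{\alpha}$-norm, whereas the estimate $\parallel (\mathcal{P}^{n}-I)y(\varsigma) \parallel_{\alpha} \to 0$ needs the stronger bound $\parallel \mathcal{A}^{\beta}y(\varsigma) \parallel \leq M^{\prime}$ for the limit itself. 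I would secure this from the uniform bound of the preceding Lemma: $\{\mathcal{A}^{\beta}y_n(\varsigma)\}$ is bounded in $\mathcal{H}$, so a subsequence converges weakly to some $w$; closedness of $\mathcal{A}^{\beta}$ together with $y_n(\varsigma) \to y(\varsigma)$ forces $w = \mathcal{A}^{\beta}y(\varsigma)$, and weak lower semicontinuity of the norm gives $\parallel \mathcal{A}^{\beta}y(\varsigma) \parallel \leq \liminf_{n} \parallel \mathcal{A}^{\beta}y_n(\varsigma) \parallel \leq M^{\prime}$.
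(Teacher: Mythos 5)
Your proof is correct, and its overall architecture matches the paper's: invoke Theorem \ref{theorem3.4} to obtain the limit $y \in \mathfrak{PC}_{T}^{\alpha}$, prove convergence of the nonlinear and impulse terms, and pass to the limit branch by branch in the integral equation. But at the key estimate you take a genuinely different route. The paper inserts $(y_n)_{\varsigma}$ as the intermediate point, estimating $\parallel \mathfrak{F}(\varsigma,\mathcal{P}^n(y_n)_{\varsigma})-\mathfrak{F}(\varsigma,y_{\varsigma}) \parallel \ \leq L_{\mathfrak{F}}\big[\parallel (\mathcal{P}^n-I)(y_n)_{\varsigma} \parallel_{0,\alpha}+\parallel y_n-y \parallel_{T,\alpha}\big]$, so the projection error lands on $y_n$, where the preceding Lemma's uniform bound $\parallel \mathcal{A}^{\beta}y_n(\varsigma) \parallel \ \leq M^{\prime}$ applies directly and no regularity of the limit is ever needed. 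You instead insert $\mathcal{P}^{n}y_{\varsigma}$, which pushes the projection error onto the limit function $y$ itself; you correctly diagnose that this requires $y(\varsigma) \in D(\mathcal{A}^{\beta})$ with a uniform bound --- something no earlier result of the paper supplies --- and you repair it with a weak-compactness argument (boundedness of $\{\mathcal{A}^{\beta}y_n(\varsigma)\}$, weak closedness of the graph of the closed operator $\mathcal{A}^{\beta}$, weak lower semicontinuity of the norm). That repair is sound, but it is extra machinery that the paper's choice of intermediate point renders unnecessary: where you place the projection determines whether you may quote the Lemma or must first transfer its bound to the limit. On the other hand, your writeup is more complete than the paper's in two respects. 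First, you state explicitly the spectral estimate $\parallel (\mathcal{P}^{n}-I)z \parallel_{\alpha} \ \leq \lambda_{n}^{-(\beta-\alpha)}\parallel \mathcal{A}^{\beta}z \parallel$ underlying the convergence, whereas the paper simply asserts that $\parallel (\mathcal{P}^n-I)(y_n)_{\varrho} \parallel_{0,\alpha} \rightarrow 0$ without restating the computation from Theorem \ref{theorem3.4}. Second, you actually prove the uniqueness asserted in the statement, by observing that the unprojected solution map is a strict contraction on $B_{R}(\mathfrak{PC}_{T}^{\alpha},\bar{\chi})$ with the same constant $D<1$ as in Theorem \ref{thrm1} (or by a Gronwall argument); the paper's proof only verifies that $y$ satisfies Eq.~(\ref{3.23}) and leaves uniqueness implicit.
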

\begin{proof}
From Theorem \ref{theorem3.4}, one can prove the existence of $y$ on $[-\tau, T]$. We only need to show that $y$ is given by Eq. (\ref{3.23}). Further, we have
\begin{align*}
\parallel \mathfrak{F}(\varrho,\mathcal{P}^n(y_n)_{\varrho}) -\mathfrak{F}(\varrho,y_{\varrho}) \parallel &\leq L_{\mathfrak{F}} \parallel \mathcal{P}^n(y_{n})_{\varrho}-y_{\varrho} \parallel_{0,\alpha} \\
&\leq L_{\mathfrak{F}}[\parallel (\mathcal{P}^n-I)(y_n)_{\varrho} \parallel_{0,\alpha}+\parallel y_n-y \parallel_{T,\alpha}].
\end{align*}
Hence, $\parallel \mathfrak{F}(\varrho,\mathcal{P}^n(y_n)_{\varrho})-\mathfrak{F}(\varrho,y_{\varrho}) \parallel \rightarrow 0$ as $n \rightarrow \infty.$\\
Also
\begin{align*}
\parallel h_{i}(\varrho,\mathcal{P}^ny_n(\varrho_i^-))-h_{i}(\varrho,y(\varrho_i^-)) \parallel_{\alpha} &\leq L_{h_i}[\parallel \mathcal{P}^ny_n(\varrho_i^-)-y(\varrho_i^-) \parallel_{\alpha}]\\
&\leq L_{h_i}[\parallel (\mathcal{P}^n-I)y_n(\varrho_i^-) \parallel_{\alpha}+ \parallel y_n(\varrho_i^-)-y(\varrho_i^-) \parallel_{\alpha}].
\end{align*}
Hence, $\parallel h_{i}(\varrho,\mathcal{P}^ny_n(\varrho_i^-)) - h_{i}(\varrho,y(\varrho_i^-)) \parallel_{\alpha} \rightarrow 0$ as $n \rightarrow \infty$.\\
Thus, we have
\begin{align*}
\int_0^\varrho \parallel \mathcal{A}^{\alpha} \mathfrak{T}(\varrho-\varsigma)  \parallel \parallel \mathfrak{F}(\varsigma,\mathcal{P}^n(y_n)_{\varsigma})-\mathfrak{F}(\varsigma,y_{\varsigma}) \parallel d\varsigma \rightarrow 0 \quad \text{when} \quad n \rightarrow \infty
\end{align*}
and
\begin{align*}
\parallel \mathfrak{T}(\varrho-\varsigma_i)\parallel & \parallel h_{i}(\varsigma_i,\mathcal{P}^ny_n(\varrho_i^-))-h_i(\varsigma_i,y(\varrho_i^-)) \parallel_{\alpha} \\
&+ \int_{\varsigma_i}^\varrho \parallel \mathcal{A}^{\alpha} \mathfrak{T}(\varrho-\varsigma) \parallel
 \parallel \mathfrak{F}_n(\varsigma,\mathcal{P}^n(y_n)_{\varsigma})-\mathfrak{F}(\varsigma,y_{\varsigma}) \parallel d\varsigma \rightarrow 0 \quad \text{when} \quad n \rightarrow \infty.
\end{align*}
Consequent to the aforementioned inequalities, we can conclude that $y$ satisfies Eq. (\ref{3.23}). 
\end{proof}
\section{Faedo-Galerkin approximations}\label{Section 5}
This segment pertains to the existence of Faedo-Galerkin approximate solutions of system (\ref{main1}) and some convergence results.\\
As a result of earlier section, there exists a unique $y \in \mathfrak{PC}_{T}^{\alpha}$ for any $-\tau \leq \varrho \leq T$, satisfying the integral equation
\begin{equation}\label{4.1}
y(\varrho)=
\begin{cases}
\ \chi(\varrho), \quad \varrho \in [-\tau, 0],\\
\ \mathfrak{T}(\varrho)\chi(0) + \int_{0}^{\varrho} \mathfrak{T}(\varrho-\varsigma)\mathfrak{F}(\varsigma,y_{\varsigma})d\varsigma, \quad \varrho \in [0,\varrho_{1}], \\
\ h_{i}(\varrho,y(\varrho_{i}^{-})), \quad \varrho \in (\varrho_{i},\varsigma_{i}], \quad i=1,2,\cdots,q, \\
\ \mathfrak{T}(\varrho-\varsigma_{i})h_{i}(\varsigma_{i},y(\varrho_{i}^{-}))+ \int_{\varsigma_{i}}^{\varrho} \mathfrak{T}(\varrho-\varsigma)\mathfrak{F}(\varsigma,y_{\varsigma})d\varsigma, \ \varrho \in (\varsigma_{i},\varrho_{i+1}], \\
\ \qquad \qquad \qquad \qquad \qquad \qquad \qquad \qquad i = 1,2,\cdots,q.
\end{cases}
\end{equation}
Also, a unique solution $y_n \in \mathfrak{PC}_{T}^{\alpha}$ satisfies the approximate integral equation
\begin{equation}\label{4.2}
y_{n}(\varrho)=
\begin{cases}
\ \chi(\varrho), \quad \varrho \in [-\tau, 0],\\
\ \mathfrak{T}(\varrho)\chi(0) + \int_{0}^{\varrho} \mathfrak{T}(\varrho-\varsigma)\mathfrak{F}_{n}(\varsigma,(y_{n})_{\varsigma})d\varsigma, \quad \varrho \in [0,\varrho_{1}], \\
\ h_{i,n}(\varrho,y_n(\varrho_{i}^{-})), \quad \varrho \in (\varrho_{i},\varsigma_{i}], \quad i=1,2,\cdots,q, \\
\ \mathfrak{T}(\varrho-\varsigma_{i})h_{i,n}(\varsigma_{i},y_n(\varrho_{i}^{-}))+ \int_{\varsigma_{i}}^{\varrho} \mathfrak{T}(\varrho-\varsigma)\mathfrak{F}_{n}(\varsigma,(y_{n})_{\varsigma})d\varsigma, \ \varrho \in (\varsigma_{i},\varrho_{i+1}], \\
\ \qquad \qquad \qquad \qquad \qquad \qquad \qquad \qquad \quad  i = 1,2,\cdots,q.
\end{cases}
\end{equation}
If we project Eq. (\ref{4.2}) onto $\mathcal{H}_n$, we get approximations i.e. $\bar{y}_n =\mathcal{P}^n y_n$ in the finite dimensional space, that are referred to as Faedo-Galerkin approximations satisfying
\begin{equation}\label{4.3}
\bar{y}_n(\varrho)=
\begin{cases}
\ \mathcal{P}^n \chi(\varrho), \quad \varrho \in [-\tau, 0],\\
\ \mathfrak{T}(\varrho)\mathcal{P}^n \chi(0) + \int_{0}^{\varrho} \mathfrak{T}(\varrho-\varsigma)\mathcal{P}^n \mathfrak{F}_{n}(\varsigma,(y_{n})_{\varsigma})d\varsigma, \quad \varrho \in [0,\varrho_{1}], \\
\ \mathcal{P}^n h_{i,n}(\varrho,y_n(\varrho_{i}^{-})), \quad \varrho \in (\varrho_{i},\varsigma_{i}], \quad i=1,2,\cdots,q, \\
\ \mathfrak{T}(\varrho-\varsigma_{i})\mathcal{P}^n h_{i,n}(\varsigma_{i},y_n(\varrho_{i}^{-}))+ \int_{\varsigma_{i}}^{\varrho} \mathfrak{T}(\varrho-\varsigma)\mathcal{P}^n \mathfrak{F}_{n}(\varsigma,(y_{n})_{\varsigma})d\varsigma, \ \varrho \in (\varsigma_{i},\varrho_{i+1}],\\
\  \qquad \qquad \qquad \qquad \qquad \qquad \qquad \qquad \qquad \qquad i = 1,2,\cdots,q,
\end{cases}
\end{equation}
where $\mathfrak{F}_n$ and $h_{i,n}$ are as previously defined.\\
The solution $y$ of (\ref{4.1}) and $\bar{y}_n$ of (\ref{4.3}) are represented by
\begin{equation}
 y(\varrho) = \sum_{i=0}^{\infty} \alpha_i(\varrho) \psi_i, \quad  \alpha_i = \langle  y(\varrho), \psi_i \rangle , \ \forall i=0,1,2,\cdots,
\end{equation}
\begin{equation}
\bar{y}_n(\varrho) = \sum_{i=0}^{n} \alpha_i^n(\varrho) \psi_i, \quad \alpha_i^n(\varrho) = \langle \bar{y}_n(\varrho), \psi_i \rangle, \ \forall i = 0,1,2,\cdots,n.
\end{equation}
\begin{theorem}
Let $\chi(\varrho) \in D(\mathcal{A}) \forall \ \varrho \in [-\tau, 0]$ and all the assumptions $\textbf{(A1)}-\textbf{(A4)}$ hold. Then,
\begin{eqnarray*}
\lim_{m \rightarrow \infty} \sup_{\{n \geq m, -\tau \leq \varrho \leq T\}} \lVert \bar{y}_{n}(\varrho) - \bar{y}_{m}(\varrho) \rVert_{\alpha} = 0.
\end{eqnarray*}
\begin{proof}
For $n \geq m$ and $0 < \alpha < \beta$, we have
\begin{align*}
\lVert \bar{y}_n(\varrho) -\bar{y}_m(\varrho) \rVert_{\alpha} &= \lVert \mathcal{P}^n y_n(\varrho) - \mathcal{P}^m y_m(\varrho) \rVert_{\alpha} \\
& \leq \lVert \mathcal{P}^n (y_n(\varrho) - y_m(\varrho)) \rVert_{\alpha} + \lVert (\mathcal{P}^n - \mathcal{P}^m ) y_m(\varrho) \rVert_{\alpha} \\
& \leq \lVert y_n(\varrho) - y_m(\varrho) \rVert_{\alpha} + \frac{1}{\lambda_m^{\beta-\alpha}} \parallel \mathcal{A}^{\beta} y_{m}(\varrho) \parallel.
\end{align*}
The desired result is obtained by the utilization of Theorem \ref{theorem3.4}.
\end{proof}
\end{theorem}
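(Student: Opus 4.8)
The plan is to reduce this statement to the two convergence facts already established: the Cauchy property of $\{y_n\}$ from Theorem \ref{theorem3.4} and the uniform bound $\lVert \mathcal{A}^{\beta} y_n(\varrho) \rVert \leq M'$ from the preceding Lemma. First I would fix $n \geq m$ and a H\"older exponent $\beta$ with $0 < \alpha < \beta < 1$, insert the intermediate term $\mathcal{P}^n y_m(\varrho)$, and apply the triangle inequality to split
\begin{align*}
\lVert \bar{y}_n(\varrho) - \bar{y}_m(\varrho) \rVert_{\alpha} \leq \lVert \mathcal{P}^n\big(y_n(\varrho) - y_m(\varrho)\big) \rVert_{\alpha} + \lVert (\mathcal{P}^n - \mathcal{P}^m) y_m(\varrho) \rVert_{\alpha}.
\end{align*}

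For the first term, the key observation is that each $\mathcal{P}^n$ is the spectral projection onto $\text{span}\{\psi_0,\ldots,\psi_n\}$, so by assumption \textbf{(A1)} it commutes with $\mathcal{A}^{\alpha}$ and is therefore a contraction in the $\lVert \cdot \rVert_{\alpha}$ norm. Hence the first term is dominated by $\lVert y_n(\varrho) - y_m(\varrho) \rVert_{\alpha} \leq \lVert y_n - y_m \rVert_{T,\alpha}$, which tends to zero uniformly in $\varrho$ as $m \to \infty$ by Theorem \ref{theorem3.4}. For the second term I would reuse the spectral estimate already derived inside the proof of Theorem \ref{theorem3.4}, namely $\lVert \mathcal{A}^{\alpha-\beta}(\mathcal{P}^n - \mathcal{P}^m)z \rVert \leq \lambda_m^{-(\beta-\alpha)}\lVert z \rVert$. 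Writing $\mathcal{A}^{\alpha}(\mathcal{P}^n - \mathcal{P}^m) = \mathcal{A}^{\alpha-\beta}(\mathcal{P}^n - \mathcal{P}^m)\mathcal{A}^{\beta}$ (again legitimate by commutativity on the eigenbasis) and applying the estimate to $z = \mathcal{A}^{\beta} y_m(\varrho)$ yields
\begin{align*}
\lVert (\mathcal{P}^n - \mathcal{P}^m) y_m(\varrho) \rVert_{\alpha} \leq \frac{1}{\lambda_m^{\beta-\alpha}} \lVert \mathcal{A}^{\beta} y_m(\varrho) \rVert \leq \frac{M'}{\lambda_m^{\beta-\alpha}},
\end{align*}
where the final bound invokes the preceding Lemma and is independent of both $n$ and $\varrho$.

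Combining the two bounds, taking the supremum over $n \geq m$ and $\varrho \in [-\tau, T]$, and letting $m \to \infty$ finishes the argument: the first contribution vanishes by the Cauchy property of $\{y_n\}$, and the second vanishes because $\lambda_m \to \infty$ while $M'$ stays fixed. I expect no substantive obstacle here; the only point requiring care is the justification that $\mathcal{P}^n$ commutes with the fractional powers $\mathcal{A}^{\alpha}$ and $\mathcal{A}^{\beta}$, but this is immediate from \textbf{(A1)} since these powers act diagonally on the orthonormal eigensystem $\{\psi_j\}$. Everything else is a direct assembly of estimates proved earlier in the paper.
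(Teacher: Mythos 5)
Your proposal is correct and follows essentially the same route as the paper's proof: the same insertion of $\mathcal{P}^n y_m(\varrho)$ via the triangle inequality, the same spectral estimate $\lVert \mathcal{A}^{\alpha-\beta}(\mathcal{P}^n-\mathcal{P}^m)z \rVert \leq \lambda_m^{-(\beta-\alpha)}\lVert z \rVert$ applied to $z = \mathcal{A}^{\beta}y_m(\varrho)$, and the same final appeal to Theorem \ref{theorem3.4}. If anything, your write-up is slightly more complete than the paper's, since you explicitly invoke the uniform bound $\lVert \mathcal{A}^{\beta}y_m(\varrho)\rVert \leq M'$ from the preceding lemma and justify the commutation of $\mathcal{P}^n$ with the fractional powers on the eigenbasis, both of which the paper leaves implicit.
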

Next, we get the following outcomes as a result of Theorem \ref{thrm1} and Theorem \ref{theorem3.4}.
\begin{theorem}\label{thrm 5.2}
Let $\chi(\varrho) \in D(\mathcal{A}) \ \forall \ \varrho \in [-\tau,0]$ and assumptions $\textbf{(A1)}-\textbf{(A4)}$ hold, then there exist functions $y \in \mathfrak{PC}([-\tau ,T];\mathcal{H})$ and  $\bar{y}_n \in \mathfrak{PC}([-\tau ,T];\mathcal{H}_{n})$ fulfil Eqs.~(\ref{4.1}) and (\ref{4.3}) respectively such that $\bar{y}_n \rightarrow y$ in $\mathfrak{PC}([-\tau, T];\mathcal{H})$ as $n \rightarrow \infty$.
\end{theorem}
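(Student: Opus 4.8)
The plan is to deduce the $\mathcal{H}$-norm convergence of the Faedo–Galerkin approximants $\bar{y}_n = \mathcal{P}^n y_n$ from two ingredients already in hand: the $\alpha$-norm convergence $y_n \to y$ of Theorem \ref{theorem3.4}, and the $n$-independent regularity bound $\lVert \mathcal{A}^{\beta} y_n(\varrho)\rVert \leq M^{\prime}$ of the preceding Lemma. First I would record that $\bar{y}_n$ really does satisfy Eq.~(\ref{4.3}): applying $\mathcal{P}^n$ to each branch of Eq.~(\ref{4.2}) and using that $\mathcal{P}^n$ commutes with $\mathfrak{T}(\varrho)$ (both are diagonal in the eigenbasis $\{\psi_j\}$) and may be carried inside the integral produces exactly Eq.~(\ref{4.3}), while the limit $y$ fulfilling Eq.~(\ref{4.1}) is the mild solution obtained in Section \ref{Section 4}. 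The substance of the statement is thus the convergence $\bar{y}_n \to y$ in $\mathcal{H}$, for which I would use the splitting
\begin{equation*}
\lVert \bar{y}_n(\varrho) - y(\varrho) \rVert \leq \lVert (\mathcal{P}^n - I) y_n(\varrho) \rVert + \lVert y_n(\varrho) - y(\varrho) \rVert, \qquad \varrho \in [-\tau, T].
\end{equation*}

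For the second term I would exploit that $0 \in \rho(-\mathcal{A})$, so $\mathcal{A}^{-\alpha}$ is bounded and $\lVert z \rVert \leq \lVert \mathcal{A}^{-\alpha}\rVert\,\lVert z \rVert_{\alpha}$ for every $z \in \mathcal{H}_{\alpha}$. Hence $\lVert y_n(\varrho) - y(\varrho) \rVert \leq \lVert \mathcal{A}^{-\alpha}\rVert\,\lVert y_n - y \rVert_{T,\alpha}$ uniformly in $\varrho$, and Theorem \ref{theorem3.4} drives the right-hand side to $0$ as $n \to \infty$.

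The main term — and the genuine obstacle — is $\lVert (\mathcal{P}^n - I) y_n(\varrho)\rVert$. Here I would fix $\beta$ with $\alpha < \beta < 1$ and reuse the spectral computation already appearing in Theorem \ref{theorem3.4}: writing $z = \sum_i a_i \psi_i$ gives $(I - \mathcal{P}^n)z = \sum_{i > n} a_i \psi_i$, so that
\begin{equation*}
\lVert (I - \mathcal{P}^n) z \rVert^2 = \sum_{i>n} |a_i|^2 \leq \frac{1}{\lambda_{n}^{2\beta}} \sum_{i>n} \lambda_i^{2\beta}\,|a_i|^2 \leq \frac{1}{\lambda_{n}^{2\beta}} \lVert \mathcal{A}^{\beta} z \rVert^2 ,
\end{equation*}
i.e. $\lVert (I - \mathcal{P}^n) z \rVert \leq \lambda_n^{-\beta}\,\lVert \mathcal{A}^{\beta} z \rVert$. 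Applying this with $z = y_n(\varrho)$ and invoking the uniform bound $\lVert \mathcal{A}^{\beta} y_n(\varrho)\rVert \leq M^{\prime}$ from the Lemma (valid for all $\varrho \in [-\tau,T]$ and independent of $n$) yields $\lVert (\mathcal{P}^n - I) y_n(\varrho)\rVert \leq M^{\prime}/\lambda_n^{\beta}$, uniformly in $\varrho$.

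Finally, since $\lambda_n \to \infty$ the first term tends to $0$ uniformly, and combining the two estimates gives $\sup_{-\tau \leq \varrho \leq T}\lVert \bar{y}_n(\varrho) - y(\varrho)\rVert \to 0$, that is, $\bar{y}_n \to y$ in $\mathfrak{PC}([-\tau,T];\mathcal{H})$. I expect the delicate point to be precisely the control of $\lVert (\mathcal{P}^n - I) y_n \rVert$: it is the interplay between the $n$-independent $\mathcal{A}^{\beta}$-regularity furnished by the Lemma and the spectral gap $\lambda_n \to \infty$ that converts the mere smallness of the tail projections into uniform norm convergence; without the uniform $\mathcal{A}^{\beta}$-bound this term could not be shown to vanish.
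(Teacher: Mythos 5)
Your proposal is correct and follows essentially the route the paper intends: the paper states this theorem as an immediate consequence of Theorem \ref{thrm1} and Theorem \ref{theorem3.4}, and your argument simply writes out that derivation, combining the $\alpha$-norm convergence $y_n \to y$ (with $\lVert \mathcal{A}^{-\alpha}\rVert$ passing it to the $\mathcal{H}$-norm) with the spectral tail estimate $\lVert (I-\mathcal{P}^n)z\rVert \leq \lambda_n^{-\beta}\lVert \mathcal{A}^{\beta}z\rVert$ and the $n$-independent bound $\lVert \mathcal{A}^{\beta}y_n(\varrho)\rVert \leq M^{\prime}$ --- exactly the two ingredients the paper itself deploys in the proof of the preceding theorem on $\bar{y}_n - \bar{y}_m$. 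Your direct comparison of $\bar{y}_n$ with $y$ (rather than first establishing the Cauchy property) is a cosmetic, not substantive, difference, and your observation that $\mathcal{P}^n$ commutes with $\mathfrak{T}(\varrho)$ in the eigenbasis correctly justifies that $\bar{y}_n$ satisfies Eq.~(\ref{4.3}).
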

\begin{theorem}
Let $ \chi(\varrho) \in D(\mathcal{A}) \forall \ \varrho \in [-\tau, 0]$ and $\textbf{(A1)}-\textbf{(A4)}$ hold. Then, the result is as follows
\begin{equation*}
\lim_{n \rightarrow \infty} \ \sup_{-\tau \leq \varrho \leq T} \bigg( \sum_{j = 0}^n \lambda_{j}^{2 \alpha} | \alpha_{j}(\varrho) - \alpha_j^n(\varrho)|^2 \bigg) = 0.
\end{equation*}
\end{theorem}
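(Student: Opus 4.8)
The plan is to recognize the displayed sum as a squared $\alpha$-norm of a finite-dimensional projection and then to reduce the whole statement to the convergence $y_n \to y$ already furnished by Theorem \ref{theorem3.4}. First I would note that for each index $j$ with $0 \le j \le n$ the two families of coefficients differ only through $y-y_n$: since $\bar{y}_n = \mathcal{P}^n y_n$ and $\psi_j \in \mathcal{H}_n$, self-adjointness of the orthogonal projection gives $\alpha_j^n(\varrho) = \langle \bar{y}_n(\varrho), \psi_j \rangle = \langle y_n(\varrho), \psi_j \rangle$, so that
\begin{equation*}
\alpha_j(\varrho) - \alpha_j^n(\varrho) = \langle y(\varrho) - y_n(\varrho), \psi_j \rangle, \qquad 0 \le j \le n.
\end{equation*}

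Next I would absorb the weights $\lambda_j^{\alpha}$ into the inner product by using the eigenfunction relation $\mathcal{A}^{\alpha}\psi_j = \lambda_j^{\alpha}\psi_j$ together with the self-adjointness of $\mathcal{A}^{\alpha}$ guaranteed by \textbf{(A1)}. Because $y(\varrho), y_n(\varrho) \in \mathcal{H}_{\alpha} = D(\mathcal{A}^{\alpha})$, one has $\lambda_j^{\alpha}\langle y(\varrho) - y_n(\varrho), \psi_j \rangle = \langle \mathcal{A}^{\alpha}(y(\varrho) - y_n(\varrho)), \psi_j \rangle$, whence
\begin{equation*}
\sum_{j=0}^{n} \lambda_j^{2\alpha}\, |\alpha_j(\varrho) - \alpha_j^n(\varrho)|^2 = \sum_{j=0}^{n} \big| \langle \mathcal{A}^{\alpha}(y(\varrho) - y_n(\varrho)), \psi_j \rangle \big|^2 .
\end{equation*}
The right-hand side is a partial Fourier sum of $\mathcal{A}^{\alpha}(y(\varrho)-y_n(\varrho))$ against the orthonormal system $\{\psi_j\}$, so Bessel's inequality bounds it by $\lVert \mathcal{A}^{\alpha}(y(\varrho)-y_n(\varrho)) \rVert^2 = \lVert y(\varrho)-y_n(\varrho) \rVert_{\alpha}^2$.

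Finally I would take the supremum over $\varrho \in [-\tau,T]$ and pass to the limit. The preceding bound is uniform in $\varrho$ and independent of the truncation level, so
\begin{equation*}
\sup_{-\tau \le \varrho \le T}\ \sum_{j=0}^{n} \lambda_j^{2\alpha}\, |\alpha_j(\varrho) - \alpha_j^n(\varrho)|^2 \ \le\ \lVert y - y_n \rVert_{T,\alpha}^2 ,
\end{equation*}
and Theorem \ref{theorem3.4} asserts precisely that $\lVert y - y_n \rVert_{T,\alpha} \to 0$ as $n \to \infty$, which closes the argument. There is no serious analytic obstacle here; the only points needing care are the interchange of the scalar weights $\lambda_j^{\alpha}$ with the operator $\mathcal{A}^{\alpha}$, which relies on self-adjointness and on the memberships $y(\varrho), y_n(\varrho) \in D(\mathcal{A}^{\alpha})$, and the remark that on $[-\tau,0]$ the difference vanishes identically since $y(\varrho) = \chi(\varrho) = y_n(\varrho)$ there, so the supremum is genuinely controlled on the entire interval $[-\tau,T]$.
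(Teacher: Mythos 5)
Your proof is correct, and it differs from the paper's in a small but genuine way. The paper works directly with the Faedo--Galerkin approximation $\bar{y}_n = \mathcal{P}^n y_n$: it expands $y(\varrho)-\bar{y}_n(\varrho)$ in the eigenbasis, uses the Parseval identity to write $\lVert y(\varrho)-\bar{y}_n(\varrho)\rVert_{\alpha}^2 = \sum_{j=0}^{\infty}\lambda_j^{2\alpha}\lvert \alpha_j(\varrho)-\alpha_j^n(\varrho)\rvert^2$ (with the convention $\alpha_j^n = 0$ for $j>n$), bounds the truncated sum by the full norm, and then concludes from Theorem \ref{thrm 5.2}. You instead reduce everything to the untransformed approximants $y_n$: the observation that $\alpha_j^n(\varrho) = \langle \mathcal{P}^n y_n(\varrho),\psi_j\rangle = \langle y_n(\varrho),\psi_j\rangle$ for $j\le n$ lets you rewrite the coefficient differences as $\langle y(\varrho)-y_n(\varrho),\psi_j\rangle$, and Bessel's inequality (after absorbing the weights via $\mathcal{A}^{\alpha}\psi_j = \lambda_j^{\alpha}\psi_j$ and self-adjointness) bounds the sum by $\lVert y - y_n\rVert_{T,\alpha}^2$, so Theorem \ref{theorem3.4} finishes the argument with no reference to Section \ref{Section 5} at all. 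This shortening of the dependency chain is a real advantage: the paper's final step needs $\lVert y-\bar{y}_n\rVert_{\alpha}\to 0$, but Theorem \ref{thrm 5.2} as stated gives convergence of $\bar{y}_n$ to $y$ only in the $\mathcal{H}$-norm, so the paper's conclusion implicitly also requires the $\alpha$-norm Cauchy estimate for $\bar{y}_n$ proved in the first theorem of Section \ref{Section 5}; your route sidesteps that issue entirely. Conversely, the paper's computation is conceptually tidier in that the displayed sum is literally a piece of $\lVert y(\varrho)-\bar{y}_n(\varrho)\rVert_{\alpha}^2$, the quantity the Faedo--Galerkin scheme is meant to control. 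The points you flag as needing care --- the memberships $y(\varrho), y_n(\varrho)\in D(\mathcal{A}^{\alpha})$, self-adjointness of $\mathcal{A}^{\alpha}$ from \textbf{(A1)}, and the identical vanishing of the difference on $[-\tau,0]$ --- are all correctly handled, so there is no gap.
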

\begin{proof}
Since
\begin{align*}
\parallel  y(\varrho)-\bar{y}_n(\varrho) \parallel_{\alpha}^2 &= \ \parallel \mathcal{A}^{\alpha}\big[y(\varrho)-\bar{y}_n(\varrho)\big] \parallel^2 \\
 &= \ \parallel \mathcal{A}^{\alpha}\bigg[ \sum_{j = 0}^{\infty}\big( \alpha_j(\varrho)-\alpha_j^n(\varrho)\big) \psi_{j}\bigg] \parallel^2 \\
&= \sum_{j = 0}^{\infty} \lambda_{j}^{2\alpha}|\alpha_j(\varrho)-\alpha_j^n(\varrho)|^2.
\end{align*}
Therefore,
\begin{align*}
 \sum_{j = 0}^n \lambda_j^{2 \alpha}| \alpha_j(\varrho)-\alpha_j^n(\varrho)|^2 \leq \ \parallel \big[y(\varrho)-\bar{y}_n(\varrho) \big] \parallel^2_{\alpha}.
\end{align*}
The required result follows from a consequence of Theorem \ref{thrm 5.2}.
\end{proof}
\section{Example}\label{Section 6}
In this segment, an illustrative example is presented to showcase the feasibility of our abstract findings. In this application, let $\mathcal{H} = L^2(0,1)$.\\
\begin{align}\label{5.1}
\frac{\partial w}{\partial \varrho}(\varrho, \xi)  &= \frac{\partial^2 w}{\partial \xi^2}(\varrho, \xi)+ G(\varrho,w(\varrho-\tau, \xi)), \quad (\varrho, \xi) \in \cup_{i = 0}^q (\varsigma_i, \varrho_{i+1}] \times (0,1), \nonumber \\
w(\varrho, \xi) &= \frac{1}{3} \text{sin}(i\varrho)w(\varrho_i^-, \xi) + \frac{w(\varrho_i^-, \xi)}{2+w(\varrho_i^-, \xi)}, \quad (\varrho,\xi) \in (\varrho_i,\varsigma_i] \times (0,1),   \\
w(\varrho,0) &= w(\varrho,1) = 0, \quad \varrho \in [0,T], \quad 0 < T < \infty, \nonumber \\
w(\varrho, \xi) &= \chi(\varrho,\xi), \quad (\varrho,\xi) \in [-\tau, 0] \times (0,1), \nonumber
\end{align}
where $\chi$ is locally H$\ddot{\text{o}}$lder continuous on $[-\tau, 0]$ satisfies $\chi(0, 0) = \chi(0,1) = 0$ and $0 = \varsigma_0 < \varrho_1 < \varsigma_1 < \cdots  < \varrho_i < \varsigma_i < \varrho_{i+1} = T$ are some real numbers. The function $G : [0,T] \times \mathcal{H} \rightarrow \mathcal{H}$ is given as
\begin{equation*}
G(\varrho,w(\varrho-\tau,\xi))= \beta(\varrho) \int_0^1 f(\xi-s)w(\varrho-\tau, s)ds,
\end{equation*}
where $\beta \in C([0,T];\mathbb{R})$, $f \in C(\mathbb{R}^2;\mathbb{R})$.\\
Let the operator $\mathcal{A} : D(\mathcal{A}) \subset \mathcal{H} \rightarrow \mathcal{H}$ be defined as:
\begin{equation}\label{5.2}
\mathcal{A}w = -\frac{\partial^2 w}{\partial \xi^2} = -w^{\prime \prime} \ \text{with} \ D(\mathcal{A}) = \{ w \in \mathcal{H}_0^1(0,1) \cap   \mathcal{H}^2(0,1) : w^{\prime \prime} \in \mathcal{H} \}.
\end{equation}
Here, the operator $\mathcal{A}$ is self-adjoint with compact resolvent and $-\mathcal{A}$ is the infinitesimal generator of an analytic $C_0$ semigroup $\{\mathfrak{T}(\varrho)\}_{\varrho \geq 0}$.  Now, let $\alpha = \frac{1}{2}$, $D(\mathcal{A}^\frac{1}{2})$ is a Banach space equipped with the norm
\begin{equation}
\parallel y \parallel_{\frac{1}{2}} = \ \parallel \mathcal{A}^{\frac{1}{2}}y \parallel, \quad y \ \in D(\mathcal{A}^\frac{1}{2}),
\end{equation}
and we enlighten this space by $\mathcal{H}_\frac{1}{2} = \mathcal{H}_0^1(0,1).$
For $y \in D(\mathcal{A})$ and $\lambda \in \mathbb{R}$ with $\mathcal{A}y = -y^{\prime \prime} = \lambda y,$ we have $\langle \mathcal{A}y,y \rangle = \langle \lambda y, y \rangle$; that is
\begin{equation*}
\langle -y^{\prime \prime},y \rangle = |y^{\prime}|^2_{L^2} = \lambda |y|^2_{L^2}.
\end{equation*}
Therefore, $\lambda > 0$. Now, the above homogeneous differential equation possess the following general solution
\begin{equation*}
y(\xi) = C \ \text{cos}(\sqrt{\lambda}\xi) + D \ \text{sin}(\sqrt{\lambda}\xi),
\end{equation*}
and the conditions $y(0) = y(1) = 0$ implies that $C = 0$ and spectrum $\lambda = \lambda_n = n^2 \pi^2$, $n \in \mathbb{N}$. Therefore, the corresponding solutions (orthonormal eigenfunctions) are provided by
\begin{equation*}
y_n(\xi) = \sqrt{2} \ \text{sin}(n \pi \xi), \quad n \in \mathbb{N}.
\end{equation*}
Also, there exists a sequence of real numbers $c_{n}$ such that
\begin{equation*}
y(\xi) = \sum_{n = 1}^{\infty} c_n y_n(\xi), \quad \text{ for any} \quad y \in D(\mathcal{A})
\end{equation*}
with
\begin{equation*}
\sum_{n = 1}^{\infty} c_{n}^2 < \infty \quad \text{and} \quad \sum_{n = 1}^{\infty} \lambda_n^2 c_{n}^2 < \infty.
\end{equation*}
For $y \in D(\mathcal{A}^\frac{1}{2})$, we have
\begin{equation*}
\mathcal{A}^{\frac{1}{2}} y(\xi) = \lambda^{\frac{1}{2}} y(\xi) = \sum_{n = 1}^{\infty} \lambda^{\frac{1}{2}} \gamma_n y_n(\xi)
\end{equation*}
that is $\sum_{n = 1}^{\infty} \lambda_n c_{n}^2 < \infty$.\\
Now, we can transform the system (\ref{5.1}) into the following abstract system in $\mathcal{H} = L^2(0,1)$:
\begin{align*}
\frac{d}{d\varrho}y(\varrho) &= -\mathcal{A} y(\varrho) + \mathfrak{F}(\varrho, y_{\varrho}), \quad \varrho \in (\varsigma_i, \varrho_{i+1}], \ i = 0,1,2, \cdots,q, \nonumber \\
y(\varrho) &= h_i(\varrho, y(\varrho_i^-)), \quad \varrho \in (\varrho_i, \varsigma_i], \ i =1,2,\cdots, q, \nonumber \\
y(0) &= y_0,
\end{align*}
where $y(\varrho) = w(\varrho, \cdot)$ that is $y(\varrho)(\xi) = w(\varrho,\xi)$, $(\varrho,\xi) \in [0,T] \times (0,1),$ $y_{\varrho}(\theta)(\xi) = y(\varrho+\theta, \xi)$, $\varrho \in [0,T], \ \xi \in (0,1), \ \theta \in [-\tau, 0]$. Equation (\ref{5.2}) defines the operator $\mathcal{A}$.\\
We define the operators by\\
\vspace{2mm}
(i) $\mathfrak{F} : [0,T] \times \mathfrak{PC}_{0}^{\frac{1}{2}} \rightarrow \mathcal{H}$ is given by
\begin{align*}
\mathfrak{F}(\varrho,\psi)(\xi) = \beta(\varrho) \int_0^1 f(\xi-s) \psi(-\tau) ds,
\end{align*}
(ii) $h_{i} : (\varrho_i,\varsigma_i] \times \mathcal{H}_{\frac{1}{2}} \rightarrow \mathcal{H}_{\frac{1}{2}}$ is given by
\begin{equation*}
h_i(\varrho,y(\varrho_i^-))(\xi) = \frac{1}{3} \text{sin}(i\varrho)y(\varrho_i^-) + \frac{y(\varrho_i^-)}{2+y(\varrho_i^-)}.
\end{equation*}
For each $\varrho \in [0,T]$ and ${\psi}_{1}, {\psi}_{2} \in \mathfrak{PC}_{0}^{\frac{1}{2}}$, we have
\begin{equation*}
\parallel \mathfrak{F}(\varrho, {\psi}_1) - \mathfrak{F}(\varrho,{\psi}_2) \parallel_{L^2(0,1)} \ \leq \ \parallel \beta \parallel_{C([0,T];\mathbb{R})} \Theta \parallel \mathcal{A}^{\frac{-1}{2}} \parallel \parallel {\psi}_1 - {\psi}_2 \parallel_{0,\frac{1}{2}}
\end{equation*}
where,
\begin{equation*}
\Theta = \bigg(\int_0^1 \int_0^1 f^2(\xi-s) ds dy\bigg)^{\frac{1}{2}} < \infty.
\end{equation*}
And
\begin{equation*}
\parallel h_i(\varrho,y_1)-h_i(\varrho,y_2) \parallel_\frac{1}{2} \ \leq \frac{1}{3} \parallel y_1-y_2 \parallel_\frac{1}{2} + \frac{1}{2} \parallel y_1-y_2 \parallel_\frac{1}{2}.
\end{equation*}
It is easy to see that all the assumptions are verified. For the particular case, we can take
\begin{equation*}
\mathfrak{F}(\varrho,y_{\varrho}) = \int_0^1 K(\varrho-s)g(y_{\varrho})ds
\end{equation*}
where K and g are Lipschitz continuous on $\mathcal{H}$. For instance, $K(\varrho-s) = \beta \sin(\varrho-s)$ and $g(y) = \text{arctan}(y)$, where $\beta$ is a constant.
\section{Conclusion}
This paper has successfully addressed the existence, uniqueness, and convergence results for approximate solutions to a non-instantaneous impulsive retarded differential equation in a separable Hilbert space. The existence and uniqueness of solutions for each approximate integral equation is proven through the use of analytic semigroup theory and the Banach fixed point technique. The article further demonstrates that these approximations form a Cauchy sequence under a suitable norm.  As a consequence, the solution to the original problem emerges as the limit of this sequence. Further, we introduced Faedo-Galerkin approximations and their convergence. An application to partial differential equations is presented in the last portion to exemplify abstract results.

\section*{Declaration}
\textbf{Conflict of interest} The authors declare that they have no conflict of interest.\vspace{5mm}\\
\textbf{Funding} Not applicable.

\end{document}